\theoremstyle{plain}
\newtheorem{theorem}{Theorem}[section]
\newtheorem{corollary}{Corollary}[section]
\newtheorem{lemma}{Lemma}[section]
\newtheorem{proposition}{Proposition}[section]
\theoremstyle{definition}
\newtheorem{assumption}{Assumption}[section]
\theoremstyle{remark}
\renewcommand{\phi}{\varphi}
\begin{document}

\title{Posterior consistency in misspecified models for i.n.i.d response}
\author{ {\sc Karthik Sriram}\footnote{Karthik Sriram  is Assistant Professor, Production and Quantitative Methods Area, Indian Institute of Management Ahmedabad, India ({\tt karthiks@iimahd.ernet.in})} \ \ and \;{\sc R.~V.~Ramamoorthi}\footnote{R.~V.~Ramamoorthi is Professor, Department of Statistics and Probability, Michigan State University, USA ({\tt ramamoor@stt.msu.edu})}   }


\date{}
\maketitle
\thispagestyle{empty}

\doublespacing

\begin{abstract}
 We derive conditions for posterior consistency when the responses are independent but not identically distributed ($i.n.i.d$) and the model is  ``misspecified" to be a family of densities   parametrized by a possibly infinite dimensional parameter.  Our approach has connections to key ideas developed  for $i.i.d$ models in \citet{Kleijn_van2006} and it's subsequent simplification in \cite{rvr.sriram.martin} (unpublished manuscript). While key results in these two papers rely heavily on the convexity of the specified family of densities, parametric families  are seldom convex. In this note, we take a  direct approach to deriving posterior consistency with respect to natural topologies on the parameter space without having to impose conditions on the convex hull of the parametric family. We first derive our results for the case when the responses are $i.i.d$ and then extend it to the $i.n.i.d$ case. As an example, we demonstrate the applicability of the results to the Bayesian quantile estimation problem. 
\smallskip

\textit{{Key words and phrases.} {Bayesian},  {Consistency}, {Misspecified}, {Kullback-Leibler}, {not identically distributed}.}
\end{abstract}

\section{Introduction}
In many applications, a statistical model for a random response variable $Y_i$ is specified in the form of a density $f_{\theta(X_i)}(\cdot)$, where the parameter $\theta(\cdot)$ is a function on the space of possible values of the covariate vector ${\bf X}_i$. A common example is the ordinary least squares linear regression where the response is assumed to be normally distributed with mean being an unknown linear function of the covariates. Such a model specification is meant to be a simplified representation of a more complex reality. Arguably, the model is therefore a misspecification of the true underlying probability distribution. However, knowledge of the entire probability distribution is often less relevant and of more interest may be the specific parameter $\theta(\cdot)$. For example, $\theta({\bf X}_i)$ could be a particular quantile of $Y_i$ given the covariates ${\bf X}_i$.  Therefore, even if the model is misspecified, it is desirable that the statistical method leads to correct inference on the parameters of interest. Bayesian analysis of such models proceeds by endowing a prior probability distribution $\Pi(\cdot)$ on the parameter space of $\theta(\cdot)$. The prior $\Pi(\theta)$ along with the specified model $f_{\theta(\cdot)}$, leads to the  conditional posterior distribution of $\theta|(Y_1,Y_2, \cdots, Y_n)$. Then, a desirable asymptotic property is posterior consistency, which requires that the posterior distribution concentrate around the true parameter $\theta_0(\cdot)$ as the sample size $n$ increases. 

In this note, we derive conditions for posterior consistency  when the responses are independent but not identically distributed ($i.n.i.d$) with $Y_i\sim $ (true density) $p_{0i}$, whereas the model for $Y_i$ is  ``misspecified" to be  a family of densities  $\{f_{\theta(X_i)}, \ \theta\in \Theta\}$, parametrized by a possibly infinite dimensional parameter $\theta(\cdot)$. 

Past studies on posterior consistency under misspecification have mostly focused on i.i.d models. Following the early work of \cite{Berk1966}, an exhaustive study of parametric models is carried out by  \citet{Bunke_milhaud1998, Lee} and in the nonparametric set up by \citet{Kleijn_van2006, DW2013}. \cite{Kleijn_van2012} give a Bernstein-von-Mises theorem for misspecified models.  In a recent yet unpublished manuscript \cite{rvr.sriram.martin} provide a simple proof of the consistency result in \citet{Kleijn_van2006} for non-parametric convex models and  also derive some specific results for weak as well as $L_1$ consistency. The study of non i.i.d misspecified models is relatively limited. \cite{shalizi2009} considers the infinite dimensional non-parametric case in the non i.i.d set up deriving general results but under somewhat stringent conditions. \citet{sriram.rvr.ghosh.2013} derive posterior consistency for i.n.i.d response in the specific case of Bayesian linear quantile regression model based on the asymmetric Laplace model specification. 

Our approach to the i.n.i.d case in this paper has connections to key ideas developed in \citet{Kleijn_van2006} for i.i.d models and it's subsequent simplification in \cite{rvr.sriram.martin}. Key results in these papers rely heavily on the convexity of the specified family of densities. However, parametric families  are seldom convex and it is desirable to have a more direct approach to deriving posterior consistency with respect to (w.r.t) natural topologies on $\theta(\cdot)$ without having to impose conditions on the convex hull of the parametric family. We circumvent the convexity requirement by making a continuity assumption on expected likelihood ratio. Apart from this,  as is standard for misspecified models,  we require that there be a parameter value $\theta^*$ such that $f_{\theta^*}$ minimizes the Kullback-Leibler (KL) divergence from the true density and that the prior put sufficient probability mass around it. 

In what follows, we first develop our approach for the case of i.i.d response in Section \ref{seciid}.  In Section \ref{secinid}, we extend the approach to the i.n.i.d case, deferring the details of the proof to Section \ref{proofinid}. We demonstrate the applicability of results using the example of Bayesian quantile estimation, both in the i.i.d as well as the i.n.i.d case. 
\section{The i.i.d. case}
\label{seciid}
Let $Y_1, Y_2, \cdots , Y_n$ be $i.i.d.$ from some density $p_0$. We will denote by $P_0$ the product measure $p_0^{\infty}$ and by $ E[\cdot]$ the expectation w.r.t this product measure. Suppose the model is specified as a family $\mathcal{F}=\left\{f_{\theta}: \ \theta \in \Theta \right\}$, which may not contain the true density $p_0$.  Let $\Pi(\cdot)$ be a prior on the parameter space $\Theta$. Our interest is in the asymptotic concentration of the posterior obtained using the model $f_{\theta}$ and the prior $\Pi$. Let $d(\cdot,\cdot)$ be a metric on $\Theta$.  It is  known that posterior concentrates around a parameter value $\theta^*$ corresponding to a density $f_{\theta^*}$ that minimizes the Kullback-Leibler divergence from $p_0$.  The posterior probability of the set $U^{c}=\{\theta: \ d(\theta, \theta^*)>\epsilon\}$ can be written as follows:
\begin{eqnarray*}
\Pi_n(U^{c}):=\frac{\int_{U^{c}} \prod_{i=1}^{n}\frac{f_{\theta}(y_i)}{f_{\theta^*}(y_i)} d\Pi(\theta)}{\int_{\Theta} \prod_{i=1}^{n}\frac{f_{\theta}(y_i)}{f_{\theta^*}(y_i)} d\Pi(\theta)}=:\frac{R_{1n}}{R_{2n}}
\end{eqnarray*}
Our interest is in deriving conditions under which this posterior probability goes to zero. This can be accomplished by first showing the denominator $R_{2n}$ tends to $\infty$ at a certain rate and then by suitably bounding the numerator $R_{1n}$. A natural condition to handle the denominator  is that the prior puts a positive mass on Kullback-Leibler neighborhoods. The following two assumptions and proposition help handle the denominator.
\begin{assumption}
\label{existencepos}
$\exists $  $\theta^* \in \Theta $ such that $\theta^*=arg\min_{\theta\in \Theta} E\log\frac{p_0}{f_{\theta}}$ and $\theta^*$ is in the $d-$ support of $\Pi$.
\end{assumption}
\begin{assumption}
 \label{cnty}
$E\log\frac{f_{\theta}}{f_{\theta^*}}$  is continuous in $\theta$  and for any $\theta_1 \in \Theta$, $E\left[\frac{f_{\theta}}{f_{\theta_1}}\right]$ is continuous in $\theta$, w.r.t the metric $d$.
  \end{assumption}
The proposition below is standard and  is used to establish the limiting property of the denominator of the posterior probability. 
\begin{proposition}
\label{denom}
Under assumptions \ref{existencepos} and \ref{cnty},  
\[\mbox{ for any } \beta>0, \ e^{n\beta}R_{2n} \rightarrow \  \infty \ a.s. \ P_0.\]
\begin{proof} The two assumptions \ref{existencepos} and \ref{cnty} ensure that KL neighborhoods of $\theta^*$ of the form $\{\theta: \ E\log\frac{f_{\theta^*}}{f_{\theta}}< \epsilon \}$ will get  a positive mass under the prior. Rest of the proof is similar to Lemma 4.4.1 of \cite{Ghosh_RVR2003}. \end{proof}
\end{proposition}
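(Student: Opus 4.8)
The plan is to follow the classical Schwartz-type lower bound behind Lemma 4.4.1 of \citet{Ghosh_RVR2003}: since the integrand in $R_{2n}$ is nonnegative, I would keep only a small $d$-ball around $\theta^*$, shrink the resulting integral with Jensen's inequality, and then invoke the strong law. Fix $\beta>0$ and $\epsilon\in(0,\beta/2)$. By Assumption \ref{existencepos}, $E\log\frac{f_{\theta^*}}{f_\theta}=E\log\frac{p_0}{f_\theta}-E\log\frac{p_0}{f_{\theta^*}}\ge 0$ and equals $0$ at $\theta^*$; since this quantity is continuous in $\theta$ (Assumption \ref{cnty}) and $\theta\mapsto E[f_\theta/f_{\theta^*}]$ is continuous with value $1$ at $\theta^*$ (the second part of Assumption \ref{cnty} with $\theta_1=\theta^*$), I can pick a ball $B=\{\theta:d(\theta,\theta^*)<\delta\}$ on which simultaneously $E\log\frac{f_{\theta^*}}{f_\theta}<\epsilon$ and $E[f_\theta/f_{\theta^*}]<2$. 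Because $\theta^*$ lies in the $d$-support of $\Pi$ (Assumption \ref{existencepos}), this forces $\Pi(B)>0$.

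Let $\mu$ be $\Pi$ restricted to $B$ and normalized to a probability measure. Dropping the mass outside $B$ and applying Jensen's inequality to the convex function $\exp$ gives
\[
R_{2n}\ \ge\ \Pi(B)\int_B \exp\Big(\sum_{i=1}^n\log\tfrac{f_\theta(y_i)}{f_{\theta^*}(y_i)}\Big)d\mu(\theta)\ \ge\ \Pi(B)\exp\Big(\sum_{i=1}^n g(y_i)\Big),\qquad g(y):=\int_B\log\tfrac{f_\theta(y)}{f_{\theta^*}(y)}\,d\mu(\theta).
\]
The variables $g(Y_i)$ are i.i.d.\ under $P_0$, and (granting the integrability checked below) Fubini gives $E[g(Y)]=\int_B E\log\frac{f_\theta}{f_{\theta^*}}\,d\mu\ge-\epsilon$, since $E\log\frac{f_\theta}{f_{\theta^*}}=-E\log\frac{f_{\theta^*}}{f_\theta}>-\epsilon$ on $B$. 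The strong law then yields $\frac1n\sum_{i=1}^n g(y_i)\to E[g(Y)]\ge-\epsilon$ a.s.\ $P_0$, so eventually $R_{2n}\ge\Pi(B)e^{-2\epsilon n}$, whence $e^{n\beta}R_{2n}\ge\Pi(B)e^{n(\beta-2\epsilon)}\to\infty$ a.s.\ $P_0$ by the choice $\epsilon<\beta/2$.

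The step that needs the most care, and the reason the second part of Assumption \ref{cnty} is present, is the integrability of $g$ that legitimizes both the Fubini identity and the strong law. Because the expectations are taken under the true $p_0$ rather than under $f_{\theta^*}$, the usual ``finite KL divergence'' shortcut for controlling $\log\frac{f_\theta}{f_{\theta^*}}$ is unavailable. I would instead set $b(y,\theta)=\log\frac{f_\theta(y)}{f_{\theta^*}(y)}$, use the elementary bound $(\log t)^+\le t$ to get $E[b^+]\le E[f_\theta/f_{\theta^*}]<2$ on $B$, and combine this with $E\log\frac{f_\theta}{f_{\theta^*}}\in(-\epsilon,0]$ (so that $E[b^-]=E[b^+]-E[b]<2+\epsilon$) to conclude $\int_B E|b|\,d\mu<\infty$. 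This makes $g$ genuinely $P_0$-integrable, so Fubini and the ordinary SLLN apply and the lower bound above is rigorous; the remaining manipulations are routine.
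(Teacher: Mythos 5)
Your argument is correct and is essentially the proof the paper intends: it is the Schwartz-type lower bound of Lemma 4.4.1 of Ghosh and Ramamoorthi (2003), which the authors cite, applied to a $d$-ball around $\theta^*$ that is contained in a KL-neighborhood and carries positive prior mass by Assumptions \ref{existencepos} and \ref{cnty}. Your explicit integrability check for $g$ via $(\log t)^{+}\le t$ together with the bound $E[f_{\theta}/f_{\theta^*}]<2$ on the ball is a worthwhile detail that the paper leaves implicit, but it does not change the route.
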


The next assumption relates the metric $d$  and the Kullback-Leibler divergence.
\begin{assumption}
\label{KLind} 
Every $d$-neighborhood  of $\theta^*$:   $U=\left\{\theta : d(\theta, \theta^*)<\epsilon \right\} $
contains a Kullback-Leibler neighborhood of the form
$\left\{\theta\in \Theta : \ E\log\frac{f_{\theta^*}}{f_{\theta}}<\delta \right\}$ for some $\delta \in (0,1)$.
\end{assumption}
A property that helps bound the numerator of the posterior probability is a condition introduced in \cite{Kleijn_van2006}. If $\mathcal{F}$ is the family of functions with $f^*$ being the KL minimizer, this condition requires that for a convex set $A$ and some $\delta>0$, we have $\sup_{f\in A}\inf_{\alpha \in [0,1]} E\left(\frac{f}{f^{*}} \right)^{\alpha} < e^{- \delta}$. 
As shown in \cite{rvr.sriram.martin} the condition implies that for a convex set $A\subset U^{c}$, $\exists \ \alpha'$ such that   $E   \left(\int_{A} \prod_{i=1}^{n}\frac{f(Y_{i})}{f^*(Y_i)}  d\Pi(f)\right)^{\alpha'}< e^{-n\delta}$.  Convexity of the set $A$ is crucial in these arguments. In the semi-parametric set up, we may not have convexity for any sub-collection of the class of densities.  As shown in the following proposition, assumptions \ref{cnty} and \ref{KLind} help circumvent the convexity requirement.
\begin{proposition}
\label{numer}
Suppose assumptions  \ref{existencepos}, \ref{cnty} and \ref{KLind}  hold.    Then for any $\theta_1\in U^{c}, \ \exists$ an open set $A_{\theta_1}$ containing $\theta_1$ such that for some $\delta>0$, $\alpha \in (0,1)$ and for any probability measure $\nu(\cdot)$ on $A_{\theta_1}$, we have:
\begin{itemize}
\item[{(a)}] $E\left[ \left(\int_{A_{\theta_1}}\frac{f_{\theta}(y)}{f_{\theta^*}(y)} d\nu(\theta) \right)^{\alpha}\right]< e^{-\alpha\frac{\delta}{2}}$.
\item[{(b)}]$E\left[ \left(\int_{A_{\theta_1}}\prod_{i=1}^{n}\frac{f_{\theta}(y_i)}{f_{\theta^*}(y_i)} d\nu(\theta) \right)^{\alpha}\right]< e^{-n\alpha\frac{\delta}{2}}$. 
\end{itemize}
\end{proposition}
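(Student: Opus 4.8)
The plan is to prove the single-observation bound (a) first and then obtain (b) from essentially the same computation by exploiting independence of the $Y_i$. For a fixed $\theta_1 \in U^{c}$ I would establish a sharp pointwise bound on a fractional moment of the likelihood ratio $f_{\theta_1}/f_{\theta^*}$, and then ``spread'' this bound to a whole open neighborhood $A_{\theta_1}$ of $\theta_1$ using the continuity of $\theta \mapsto E[f_\theta/f_{\theta_1}]$ from Assumption \ref{cnty}. The device that replaces the convexity argument of \cite{Kleijn_van2006} and \cite{rvr.sriram.martin} is a multiplicative split of the integrand combined with H\"older's inequality, which confines the $\nu$-mixture to a factor whose expectation can be made arbitrarily close to $1$ by shrinking $A_{\theta_1}$, uniformly over all probability measures $\nu$ on $A_{\theta_1}$.

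The first and main step is the pointwise estimate at $\theta_1$. By Assumption \ref{KLind}, $U$ contains a Kullback-Leibler neighborhood $\{\theta : E\log(f_{\theta^*}/f_\theta) < \delta\}$ for some $\delta \in (0,1)$; taking complements shows that every $\theta_1 \in U^{c}$ satisfies $E\log(f_{\theta_1}/f_{\theta^*}) \le -\delta$. Writing $R = f_{\theta_1}/f_{\theta^*}$ and $g(\beta) = E[R^{\beta}]$, I would note that $g$ is finite on $(0,1]$ because $E[f_{\theta_1}/f_{\theta^*}]$ is finite, which is the continuity of $\theta \mapsto E[f_\theta/f_{\theta^*}]$ in Assumption \ref{cnty} read with base point $\theta^*$, together with concavity of $x \mapsto x^{\beta}$. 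Since $g(0)=1$ and, by a dominated-convergence argument justifying differentiation under the expectation, the right derivative is $g'(0^{+}) = E\log R \le -\delta < 0$, the function $g$ strictly decreases below $1$ as $\beta$ increases from $0$; hence there exist $\beta \in (0,1)$ and a constant $c > \delta/2$ with $E[R^{\beta}] \le e^{-c\beta}$. This is the step I expect to be the main obstacle: the fractional moment must be shown finite (via Assumption \ref{cnty}) and the interchange of derivative and expectation needs an integrable dominating function, for which one uses the monotonicity of $\beta \mapsto (R^{\beta}-1)/\beta$ on $\{R \ge 1\}$ together with integrability of $\log R$ on $\{R < 1\}$.

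With the pointwise bound in hand, I would localize as follows. Factor $f_\theta/f_{\theta^*} = (f_{\theta_1}/f_{\theta^*})(f_\theta/f_{\theta_1})$, so that $\int_{A_{\theta_1}} (f_\theta/f_{\theta^*})\,d\nu = (f_{\theta_1}/f_{\theta^*}) \int_{A_{\theta_1}} (f_\theta/f_{\theta_1})\,d\nu$, the first factor being independent of $\theta$. Raising to the power $\alpha := \beta/2$ and applying H\"older's inequality with conjugate exponents $r = r' = 2$ gives
\[ E\Big[\Big(\int_{A_{\theta_1}} \tfrac{f_\theta}{f_{\theta^*}}\, d\nu\Big)^{\alpha}\Big] \le \big(E[R^{\beta}]\big)^{1/2}\, \Big(E\Big[\Big(\int_{A_{\theta_1}} \tfrac{f_\theta}{f_{\theta_1}}\, d\nu\Big)^{\beta}\Big]\Big)^{1/2}. \]
The first factor is at most $e^{-c\beta/2} = e^{-c\alpha}$. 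For the second, concavity of $x \mapsto x^{\beta}$ ($\beta < 1$) and Fubini give $E[(\int_{A_{\theta_1}} (f_\theta/f_{\theta_1})\,d\nu)^{\beta}] \le (\int_{A_{\theta_1}} E[f_\theta/f_{\theta_1}]\,d\nu)^{\beta}$; since $E[f_{\theta_1}/f_{\theta_1}] = 1$ and $\theta \mapsto E[f_\theta/f_{\theta_1}]$ is continuous by Assumption \ref{cnty}, I may choose the open set $A_{\theta_1}$ so small that $E[f_\theta/f_{\theta_1}] < 1+\eta$ on $A_{\theta_1}$ with $\log(1+\eta) < c - \delta/2$. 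Crucially this bound holds for every probability measure $\nu$ on $A_{\theta_1}$, so the second factor is at most $(1+\eta)^{\alpha}$ and the product is at most $e^{-\alpha(c - \log(1+\eta))} < e^{-\alpha\delta/2}$, which is (a).

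Part (b) follows by repeating the split observation-wise: $\prod_{i=1}^{n} (f_\theta(y_i)/f_{\theta^*}(y_i)) = \big(\prod_i f_{\theta_1}(y_i)/f_{\theta^*}(y_i)\big)\big(\prod_i f_\theta(y_i)/f_{\theta_1}(y_i)\big)$, pulling the $\theta$-free product out of the $\nu$-integral and applying H\"older with $r = r' = 2$. Because the $Y_i$ are i.i.d., the first expectation factorizes as $E[(\prod_i R_i)^{\beta}] = (E[R^{\beta}])^{n} \le e^{-c\beta n}$, while for the second, concavity, Fubini and independence give $E[(\int_{A_{\theta_1}} \prod_i (f_\theta(y_i)/f_{\theta_1}(y_i))\, d\nu)^{\beta}] \le (\int_{A_{\theta_1}} (E[f_\theta/f_{\theta_1}])^{n}\, d\nu)^{\beta} \le (1+\eta)^{\beta n}$, again uniformly in $\nu$. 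Combining, the left side of (b) is at most $e^{-c\alpha n}(1+\eta)^{\alpha n} = e^{-n\alpha(c-\log(1+\eta))} < e^{-n\alpha\delta/2}$, with the same $A_{\theta_1}$, $\alpha$ and $\delta$ as in (a). The only effect of $n$ is to multiply every exponent by $n$, which is precisely why the single-observation estimate already determines the choice of $A_{\theta_1}$, $\alpha$ and $\delta$.
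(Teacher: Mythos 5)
Your proof is correct, and for part (a) it is essentially the paper's argument: the same factorization $f_{\theta}/f_{\theta^*}=(f_{\theta_1}/f_{\theta^*})\cdot(f_{\theta}/f_{\theta_1})$, Cauchy--Schwarz with the halved exponent, Jensen on the outer expectation plus Fubini for the mixture factor, and the same kind of neighborhood $A_{\theta_1}=\{\theta:\ E[f_{\theta}/f_{\theta_1}]<\text{const}\}$, open by Assumption \ref{cnty}. Two things differ. First, where the paper invokes Lemma 6.3 of \cite{Kleijn_van2006} to get $E(f_{\theta_1}/f_{\theta^*})^{\alpha'}<e^{-\alpha'\delta/2}$ for small $\alpha'$, you rederive that lemma via the right derivative of $\beta\mapsto E[R^{\beta}]$ at $0$; this is the same monotone/dominated-convergence fact and buys only self-containedness, though you correctly flag it as the step needing the most care. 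Second, and more substantively, for part (b) the paper argues by induction on $n$, conditioning on $Y_1,\dots,Y_k$ and reweighting $\nu$ to the data-dependent measure $\nu_{y_1,\dots,y_k}$ so that part (a) can be reapplied to the last factor; you instead apply Cauchy--Schwarz once to the whole product and use Jensen, Fubini and independence to factorize $E\bigl[\prod_{i}f_{\theta}(Y_i)/f_{\theta_1}(Y_i)\bigr]=\bigl(E[f_{\theta}/f_{\theta_1}]\bigr)^{n}$. Your route is shorter and makes transparent why the single-observation estimate already fixes $A_{\theta_1}$, $\alpha$ and $\delta$; the paper's conditional-reweighting induction is the template it reuses in the i.n.i.d.\ setting of Proposition \ref{numerinid}, where your direct factorization would also work since the product of the per-observation bounds is all that is needed. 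A minor bonus of your version: by leaving the neighborhood threshold $1+\eta$ free with $\log(1+\eta)<c-\delta/2$, your constants actually close, whereas with the paper's hard-coded threshold $e^{\delta/2}$ the two Cauchy--Schwarz factors in (a) multiply to $e^{-\alpha\delta/2}\cdot e^{\alpha\delta/2}=1$ rather than $e^{-\alpha\delta/2}$, so the paper implicitly needs a smaller threshold (or a relabelled $\delta$) --- a cosmetic slip that your bookkeeping avoids.
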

We defer the proof of this proposition to section \ref{proofinid} and now present the posterior consistency result for the i.i.d case when the parameter space is compact.
\begin{theorem}
\label{thmiid}
If $\Theta $ is compact and assumptions \ref{existencepos},  \ref{cnty} and \ref{KLind} hold, then  $\Pi_{n}(U^{c}) \rightarrow \ 0 \ a.s. \ P_0$.
\begin{proof} Note that proposition  \ref{numer} can also be applied by taking $ \nu(\cdot)=\frac{\Pi(\cdot)}{\Pi(A_{\theta_1})}$. So, for any $\theta_1\in U^c$, $\exists \ \delta>0$ and an open set $A_{\theta_1}$ containing $\theta_1$ such that ,
\begin{eqnarray*}
&&P\left(  \left(e^{n\frac{ \delta}{4}}\int_{A_{\theta_1}} \prod_{i=1}^{n} \frac{f_{\theta}(y_i)}{f_{\theta^*}(y_i)}d\Pi(\theta)\right)^{\alpha}> \epsilon^{\alpha} \right)\\
&&\leq \frac{\Pi^{\alpha}(A_{\theta_1})}{\epsilon^{\alpha}}\cdot E\left(e^{n\frac{ \delta}{4}}\int_{A_{\theta_1}} \prod_{i=1}^{n} \frac{f_{\theta}(y_i)}{f_{\theta^*}(y_i)}\frac{d\Pi(\theta)}{\Pi(A_{\theta_1})}\right)^{\alpha}\leq \frac{e^{-n\alpha \frac{\delta}{4}}}{\epsilon^{\alpha}}.
\end{eqnarray*}
Therefore, by Borel-Cantelli Lemma, we can conclude that
\[e^{n\frac{ \delta}{4}}\int_{A_{\theta_1}} \prod_{i=1}^{n} \frac{f_{\theta}(y_i)}{f_{\theta^*}(y_i)}d\Pi(\theta) \rightarrow \ 0 \ a.s. \ P_0. \]
By proposition \ref{denom}, it follows in particular that 
\[e^{n\frac{ \delta}{4}}\int_{\Theta} \prod_{i=1}^{n} \frac{f_{\theta}(y_i)}{f_{\theta^*}(y_i)}d\Pi(\theta) \rightarrow \ \infty \ a.s. \ P_0. \]
Considering the ratio of the above two quantities immediately gives $\Pi_n(A_{\theta_1})\rightarrow \ 0 \ a.s. \ P_0$.
By compactness, $U^c$ can be covered by finitely many sets of the form $A_{\theta_1}$. Hence the result follows.
\end{proof}
\end{theorem}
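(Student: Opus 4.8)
The plan is to localize the argument around each point of $U^c$ and then patch the local estimates together using compactness. Recall that $\Pi_n(U^c)=R_{1n}/R_{2n}$, and that the denominator is already under control: Proposition \ref{denom} gives $e^{n\beta}R_{2n}\to\infty$ a.s. $P_0$ for \emph{every} $\beta>0$. It therefore suffices to exhibit, around each $\theta_1\in U^c$, a neighborhood on which the corresponding numerator integral decays like $e^{-cn}$ for some $c>0$; dividing such a decaying numerator by a denominator that outgrows it will send the local posterior mass to zero.

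Fix $\theta_1\in U^c$. Proposition \ref{numer}, applied with the probability measure $\nu=\Pi(\cdot)/\Pi(A_{\theta_1})$ on the open set $A_{\theta_1}$ it provides, yields $\delta>0$ and $\alpha\in(0,1)$ controlling the $\alpha$-th moment of the local product integral. Writing $Z_n:=e^{n\delta/4}\int_{A_{\theta_1}}\prod_{i=1}^n \frac{f_\theta(y_i)}{f_{\theta^*}(y_i)}\,d\Pi(\theta)$ and applying Markov's inequality to $Z_n^{\alpha}$, the exponent $\delta/4$ is small enough that the gain $e^{n\alpha\delta/4}$ is dominated by the contraction $e^{-n\alpha\delta/2}$ of Proposition \ref{numer}, leaving $P(Z_n>\epsilon)\le \epsilon^{-\alpha}e^{-n\alpha\delta/4}$ (using $\Pi(A_{\theta_1})^{\alpha}\le 1$). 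This bound is summable in $n$, so Borel--Cantelli yields $Z_n\to 0$ a.s. $P_0$.

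Dividing through by $e^{n\delta/4}R_{2n}$, whose limit is $+\infty$ almost surely by Proposition \ref{denom} taken with $\beta=\delta/4$, I obtain $\Pi_n(A_{\theta_1})\to 0$ a.s. $P_0$. To upgrade this pointwise (in $\theta_1$) control to control over all of $U^c$, I would invoke compactness: the sets $\{A_{\theta_1}:\theta_1\in U^c\}$ form an open cover of $U^c$, which is closed in the compact space $\Theta$ and hence itself compact, so finitely many of them, say $A_{\theta_1^{(1)}},\dots,A_{\theta_1^{(k)}}$, already cover $U^c$. Then $\Pi_n(U^c)\le\sum_{j=1}^k\Pi_n(A_{\theta_1^{(j)}})$, a finite sum of terms each tending to $0$ almost surely, which gives the claim.

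The substantive content sits inside Proposition \ref{numer}, which I am taking as given: it is there that Assumptions \ref{cnty} and \ref{KLind} are converted into an honest exponential contraction of the $\alpha$-th moment on a \emph{single} neighborhood, dispensing with the convexity of $A$ used in \citet{Kleijn_van2006}. Once that is in hand, the only point requiring care in the present proof is that $\delta$ and $\alpha$ are furnished per point $\theta_1$ and need not be uniform over $U^c$; the finite subcover reduces this to finitely many rates, and since a finite sum of almost-surely vanishing sequences vanishes almost surely (the intersection of finitely many probability-one events having probability one), no uniform rate is actually needed. Compactness of $\Theta$ is thus the structural hypothesis that turns local contraction into global consistency, and I expect it to be the only genuinely load-bearing step beyond the two propositions.
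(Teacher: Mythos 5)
Your proposal is correct and follows essentially the same route as the paper's proof: Markov's inequality applied to the $\alpha$-th moment bound from Proposition \ref{numer} with $\nu=\Pi(\cdot)/\Pi(A_{\theta_1})$, Borel--Cantelli, division by the denominator controlled via Proposition \ref{denom} with $\beta=\delta/4$, and a finite subcover from compactness. Your added remarks on the non-uniformity of $\delta$ and $\alpha$ and on summing finitely many almost-surely vanishing sequences are exactly the (implicit) justification the paper relies on.
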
  
\begin{corollary}
\label{coriid}
Suppose  assumptions \ref{existencepos},  \ref{cnty} and \ref{KLind} hold. If the parameter space $\Theta$ can be written as  $\Theta_1 \cup \Theta_2$ such that $\Theta_1$ is compact and  for some  $\delta_1>0 $ we have  $E\left(\frac{f_{\theta}(Y)}{f_{\theta^*}(Y)}\right)<e^{-\delta_1}$, then  $\Pi_{n}(U^{c}) \rightarrow \ 0 \ a.s. \ P_0$.
\begin{proof}
First, it follows from theorem \ref{thmiid} that $\Pi_{n}(G\cap \Theta_1) \rightarrow \ 0 \ a.s. \ P_0$. In order to show $\Pi_{n}(G\cap \Theta_2) \rightarrow \ 0$, note that
\begin{eqnarray*}
E\left[ \int_{\Theta_2}\prod_{i=1}^{n}\frac{f_{\theta}(y_i)}{f_{\theta^*}(y_i)} d\Pi(\theta) \right]= \int_{\Theta_2}\prod_{i=1}^{n} E\left[\frac{f_{\theta}(y_i)}{f_{\theta^*}(y_i)}\right] d\Pi(\theta)\leq e^{-n\delta_1}.
\end{eqnarray*}
Now, using similar arguments as in theorem \ref{thmiid} it follows that $\Pi_{n}(\Theta_2) \rightarrow \ 0 \ a.s. \ P_0$.
\end{proof}
\end{corollary}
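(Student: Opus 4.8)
The plan is to split $U^c$ along the given decomposition, writing $\Pi_n(U^c) \le \Pi_n(U^c \cap \Theta_1) + \Pi_n(\Theta_2)$, and to control each term separately. The denominator $R_{2n} = \int_\Theta \prod_{i=1}^n \frac{f_\theta(y_i)}{f_{\theta^*}(y_i)}\,d\Pi(\theta)$ is common to both ratios and is already under control: Proposition \ref{denom} gives $e^{n\beta}R_{2n} \to \infty$ a.s.\ $P_0$ for every $\beta > 0$. It therefore suffices to exhibit, for each piece, a numerator that decays at a fixed exponential rate $e^{-nc}$ with $c > 0$; dividing such a numerator by $R_{2n}$ and invoking Proposition \ref{denom} with $\beta = c/2$ then forces the corresponding ratio to $0$ a.s.\ $P_0$.

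For the compact piece I would note that $U^c \cap \Theta_1$ is closed in the compact set $\Theta_1$ and hence compact. The internal argument in the proof of Theorem \ref{thmiid} produces, for each $\theta_1 \in U^c$, an open neighborhood $A_{\theta_1}$ and a $\delta > 0$ with $e^{n\delta/4}\int_{A_{\theta_1}} \prod_{i=1}^n \frac{f_\theta(y_i)}{f_{\theta^*}(y_i)}\,d\Pi(\theta) \to 0$ a.s.\ $P_0$; this step relies only on Propositions \ref{numer} and \ref{denom} and not on global compactness of $\Theta$. Extracting a finite subcover of $U^c \cap \Theta_1$ and summing the finitely many integrals, the integral over $U^c \cap \Theta_1$ inherits the decay rate $e^{-n\delta'/4}$ for the smallest $\delta'$ in the subcover, whence $\Pi_n(U^c \cap \Theta_1) \to 0$ a.s.\ $P_0$ by the ratio mechanism above.

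For the tail piece I would dispense with the $\alpha$-moment machinery of Proposition \ref{numer} and instead use the uniform first-moment bound directly. By Tonelli and independence of the $Y_i$,
\[
E\left[\int_{\Theta_2}\prod_{i=1}^n \frac{f_\theta(y_i)}{f_{\theta^*}(y_i)}\,d\Pi(\theta)\right]
= \int_{\Theta_2}\left(E\left[\frac{f_\theta(Y)}{f_{\theta^*}(Y)}\right]\right)^n d\Pi(\theta) \le e^{-n\delta_1}.
\]
A Markov bound applied to $e^{n\delta_1/2}\int_{\Theta_2}\prod_{i=1}^n \frac{f_\theta(y_i)}{f_{\theta^*}(y_i)}\,d\Pi(\theta)$ then gives a summable tail $e^{-n\delta_1/2}$, so Borel--Cantelli yields $e^{n\delta_1/2}\int_{\Theta_2}\prod_{i=1}^n \frac{f_\theta(y_i)}{f_{\theta^*}(y_i)}\,d\Pi(\theta)\to 0$ a.s.\ $P_0$, and the ratio mechanism gives $\Pi_n(\Theta_2)\to 0$. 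The only genuine subtlety --- and the step I would check most carefully --- is conceptual rather than computational: one must be sure that on $\Theta_2$ the crude first moment already decays geometrically, so that the convexity-circumventing local bound of Proposition \ref{numer}, which on the compact part is needed to control a whole neighborhood of each $\theta_1$, is simply not required here; and one must confirm that the per-point conclusion $\Pi_n(A_{\theta_1}) \to 0$ borrowed from Theorem \ref{thmiid} does not covertly rely on global compactness of $\Theta$. Adding the two pieces then completes the proof.
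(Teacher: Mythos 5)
Your proposal is correct and follows essentially the same route as the paper: invoke the covering argument of Theorem \ref{thmiid} on the compact piece $U^{c}\cap\Theta_1$, and on $\Theta_2$ use Tonelli plus independence to get the first-moment bound $e^{-n\delta_1}$, then Markov, Borel--Cantelli, and division by $R_{2n}$ via Proposition \ref{denom}. Your explicit check that the per-neighborhood decay in Theorem \ref{thmiid} uses compactness only to extract a finite subcover (so it applies to the closed subset $U^c\cap\Theta_1$ of $\Theta_1$ even though the prior and denominator live on all of $\Theta$) is a point the paper leaves implicit, but it is the same argument.
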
 
  \subsection{Example: Bayesian quantile estimation.}
  \label{eg1}
  Consider a family of asymmetric Laplace densities (ALD), i.e. $\mathcal{F} =\{f_{\theta}: \ \theta\in \Theta \}$, where $\Theta \subseteq (-\infty, \infty)$,  $f_{\theta}(y)=\tau(1-\tau) e^{-(y-\theta)(\tau-I_{y\leq \theta})}$ for $y \in (-\infty, \infty)$. The parameter $\theta$ can be interpreted as the $\tau^{th}$ quantile of the density $f_{\theta}$. Let $P_0$ be the true underlying distribution of $Y$ with a unique $\tau^{th}$ quantile given by $\theta_0$. We are interested in the posterior probability of the set $\{|\theta - \theta_0|>\epsilon\}$.
  
We will first consider the case when $\Theta$ is compact. By the properties of ALD, it can be seen (see proposition 1, lemmas 1 and 2 of \citealt{sriram.rvr.ghosh.2013}) that (a) $\theta^*=\theta_0$, (b) $|\log\frac{f_{\theta^*}}{f_{\theta}}|\leq |\theta-\theta^*|$ and (c) that if $|\theta-\theta^*|> \epsilon$ then:
 \[ E\log\frac{f_{\theta^*}}{f_{\theta}}> \delta = \frac{\epsilon}{2}\cdot \min\left\{P_0\left(0<Y-\theta_0<\frac{\epsilon}{2}\right), P_0\left(-\frac{\epsilon}{2}<Y-\theta_0<0\right) \right\}. \]
  
 It follows by (b) that $E\log\frac{f_{\theta^*}}{f_{\theta}}$ is continuous and further  by compactness of $\Theta$ that $\frac{f_{\theta}}{f_{\theta_1}}$ is uniformly bounded. An application of dominated convergence theorem(DCT) would then imply that $E\left[ \frac{f_{\theta}}{f_{\theta_1}}\right]$ is continuous. Hence assumption \ref{cnty} is satisfied and \ref{existencepos} is satisfied as long as the prior puts positive mass on all neighborhoods of $\theta_0$. Finally, using (c), as long as the density function of $P_0$ is positive and continuous at the true quantile $\theta_0$, assumption \ref{KLind} is satisfied. So, theorem \ref{thmiid} applies.
  
To deal with the case when $\Theta=(-\infty, \infty)$, we will use corollary \ref{coriid}. Lemma 1 of \citealt{sriram.rvr.ghosh.2013}) gives the following useful inequality.
\begin{eqnarray*}
\log\frac{f_{\theta}(Y_i)}{f_{\theta_0}(Y_i)}< -|\theta-\theta_0|\cdot \min\{\tau, 1-\tau\} + |Y_i-\theta_0|
\end{eqnarray*}
Suppose $E|Y-\theta_0|<\infty$, then using Strong Law of Large Numbers (S.L.L.N),  $\exists \ n_0$ such that $\forall \ n\geq n_0$, $\sum_{i=1}^{n}|Y_i-\theta_0| < 2nE|Y-\theta_0|$. Now, if  $\Theta_1=\left[-\frac{3E|Y-\theta_0|}{\min\{\tau,1-\tau\}},  \frac{3E|Y-\theta_0|}{\min\{\tau,1-\tau\}}\right]$ and $\Theta_2=\Theta_1^{c}$, then $\forall \ n\geq n_0, \ \forall \ \theta\in \Theta_2$, we would have $\sum_{i=1}^{n}\log\frac{f_{\theta}(Y_i)}{f_{\theta^*}(Y_i)}< - nE|Y-\theta_0|$ and hence we would have $E\left[\prod_{i=1}^{n}\frac{f_{\theta}(Y_i)}{f_{\theta_0}(Y_i)}\right]\leq e^{ - nE|Y-\theta_0|}$.  Since $Y_i$ are $i.i.d.$ it follows that $E\left[\frac{f_{\theta}(Y)}{f_{\theta_0}(Y)}\right]\leq e^{ - \delta_1}$ for $\delta_1=E|Y-\theta_0|$. In summary,  for the possibly misspecified ALD model $\{f_{\theta}, \ \theta\in(-\infty, \infty)\}$, the posterior  concentrates around the true quantile value, $i.e. \ \Pi_n(|\theta-\theta_0|>\epsilon)\rightarrow \ 0 \ a.s. \ P_0$, if (i) the true density $p_0$ is continuous and positive at the true quantile $\theta_0$, with finite expectation and (ii) the prior $\Pi$ puts positive mass on all neighborhoods of $\theta_0$.

\section{The i.n.i.d. case}
\label{secinid}
 We will now extend the ideas from the previous section to the i.n.i.d case. We assume that the distribution of the response $Y$ is determined in principle by the knowledge of a covariate vector ${\bf X}$. In other words, there exists an unknown ``true" density function $p_{0 {\bf x}}(\cdot)$ with ${\bf x}\in \mathcal{X}$, such that $Y|{\bf X}={\bf x}\sim p_{0{\bf x}}$.  So, for the $i^{th}$ observed response $Y_i$ with covariate value ${\bf X}_i={\bf x}_i$, $Y_i \sim p_{0 {\bf x}_i}$. The ${\bf X}_i$ could be non-random and hence $Y_1, Y_2,\cdots, Y_n$ are independent but non-identically distributed. $ E_{{\bf x}}[\cdot]$ will denote the expectation w.r.t the density $p_{0{\bf x}}$. We will denote by $P_0$ the infinite product measure $p_{0{\bf x}_1}\times p_{0{\bf x}_2}\times \cdots$ and by $E[\cdot]$, the expectation w.r.t this product measure.   

Suppose we have a family of densities $\mathcal{F}=\left\{f_{t}: \ t\in [-M,M] \right\}$. Let $\Theta$ be a class of  continuous functions from $\mathcal{X}$ to  $[-M,M]$. For ease of notation, we write $\theta({\bf x})$ as $\theta_{\bf x}$. The specified model is that  $Y_i \sim f_{\theta_{{\bf x}_i}}$, where $\theta \in \Theta$ is the unknown possibly infinite dimensional parameter. A simple example is  the simple linear regression problem where $f_{t}$ would be $N(t,1)$ and $\theta_x=\alpha + \beta x$.  

We will consider the sup-norm metric on $\Theta$ and denote it by $d(\cdot,\cdot)$ to derive the results. Let $\Pi(\cdot)$ be a prior on the parameter space $\Theta$. Extending the ideas developed in the i.i.d case, we can obtain the analogous assumptions for the i.n.i.d case. The following three assumptions are analogous to assumptions \ref{existencepos}, \ref{cnty} and \ref{KLind}.
\begin{assumption}
\label{existenceposinid}
$\exists $  $\theta^* \in \Theta $ such that $\theta^*_{{\bf x}}=arg\min_{t\in [-M,M]} E_{{\bf x}}\log\frac{p_{0 {\bf x}}}{f_{t}}, \ \forall \ {\bf x} \ \in \mathcal{X}$ and $\theta^*$ is in the sup-norm support of $\Pi$.
\end{assumption} 

\begin{assumption}
\label{cntyinid}
\begin{itemize}
\item[]
\item[] a) $E_{{\bf x}}\left[ \log\frac{f_{t}}{f_{t'}}\right]$ and $E_{{\bf x}}\left[ \left(\frac{f_{t}}{f_{t'}}\right)^{\alpha}\right]$  for every $\alpha\in[0,1]$, are continuous functions in $({\bf x},t,t')\in \mathcal{X}\times [-M, M]^2$.
\item[] b) $E_{\bf x}\log^2\frac{f_t}{f_{t'}}$ is bounded for $({\bf x}, t,t') \in \mathcal{X}\times [-M, M]^2$.
\end{itemize}
\end{assumption}
 Condition (a) in the above assumption will hold if $\frac{f_t(y)}{f_t'(y)}$ is continuous in $(t,t')$ for each $y$ and if $p_{{\bf x}}(y)$ can be bounded by an integrable function in $y$. Condition (b) as will be seen later is to enable the application of S.L.L.N for independent random variables.
 \begin{assumption}
\label{KLindinid}
For any $\epsilon>0$, $\exists \ \delta\in (0,1)$ such that
\[ \left\{ t \in [-M, M]: E_{{\bf x}}\log\frac{f_{\theta^*_{\bf x}}}{f_{t}} <\delta\right\} \subseteq \left\{ t: \ |t-\theta^*_{\bf x}|<\epsilon\right\}, \ \forall \ {\bf x}\in\mathcal{X}.\]
\end{assumption}
 
 We make the following assumption with regard to the covariate space and the parameter space.
  \begin{assumption}
  \label{compactinid}
  The covariate space $\mathcal{X}$ is compact w.r.t a norm $\|\cdot\|$ and $\Theta$ is  a compact subset of continuous functions from $\mathcal{X}\rightarrow \mathcal{R}$ endowed with the sup-norm metric, i.e. $d(\theta_1, \theta_2)=\sup_{x\in \mathcal{X}}|\theta_1(x) -\theta_2(x)|$.
  \end{assumption} 
  Further,  in order for the parameter $\theta$ to be estimable, we would need some kind of a condition on the spread of points in the set $\{x_i, i\geq 1\}$ w.r.t the space $\mathcal{X}$. For example, if $\theta(x)=\alpha +\beta x$, i.e. a function involving two parameters, then having all $x_i$  equal to a constant would cause identifiability issues. In that case, we would need that the $x_i$'s take at least two distinct values for infinitely many $i$'s. The following condition helps avoid such issues.

 \begin{assumption}
 \label{denseinid}
 For any given ${\bf x}_0\in \mathcal{X}$, $\delta'>0$, let $A_{{\bf x}_0,\delta'}=\{{\bf x}: \ \|{\bf x}-{\bf x}_0\|<\delta' \}$ and $I_{A_{{\bf x}_0,\delta'}}({\bf x})$  be the indicator function which is $1$ when ${\bf x}\in A_{{\bf x}_0}$ and $0$ otherwise. Then, $\kappa({\bf x}_0, \delta')= \liminf_{n\geq 1} \frac{1}{n}\sum_{i=1}^{n}I_{A_{{\bf x}_0,\delta'}}({\bf x}_i)>0.$
 \end{assumption}
 
 As before, we can write the posterior probability of a set $U^{c}= \{\theta\in \Theta : d(\theta, \theta^*)>\epsilon\}$ as follows:
\begin{eqnarray*}
\Pi_n(U^{c}):=\frac{\int_{U^{c}} \prod_{i=1}^{n}\frac{f_{\theta_{{\bf x}_i}}(Y_i)}{f_{\theta^*_{{\bf x}_i}}(Y_i)} d\Pi(\theta)}{\int_{\Theta} \prod_{i=1}^{n}\frac{f_{\theta_{{\bf x}_i}}(Y_i)}{f_{\theta^*_{{\bf x}_i}}(Y_i)} d\Pi(\theta)}=:\frac{R'_{1n}}{R'_{2n}}
\end{eqnarray*}
 
Now, similar to the i.i.d case, the following two propositions help prove the posterior consistency result for the i.n.i.d case. 
\begin{proposition}
\label{denominid}
 Under assumptions \ref{existenceposinid} and \ref{cntyinid}, 
\[ \mbox{ for any } \beta>0, \  e^{n\beta}R'_{2n} \rightarrow \ \infty \ a.s. \ P_0.\]
\end{proposition}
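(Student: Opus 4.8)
The plan is to mirror the i.i.d.\ argument behind Proposition~\ref{denom} (i.e.\ Lemma 4.4.1 of \cite{Ghosh_RVR2003}): I bound $R'_{2n}$ from below by restricting the integral to a sup-norm neighborhood of $\theta^*$ on which the per-observation expected log-likelihood ratio is uniformly small, apply Jensen's inequality, and then invoke a strong law. The two new features relative to the i.i.d.\ case are that (i) the neighborhood must control the \emph{Kullback--Leibler} discrepancy uniformly across all covariate values, and (ii) the averaging is over independent but non-identically distributed summands, so an i.n.i.d.\ strong law must replace the ordinary SLLN.

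\textbf{Step 1 (choice of neighborhood).} Fix $\beta>0$ and pick $\epsilon\in(0,\beta)$. Writing $\Phi({\bf x},t,t')=E_{{\bf x}}\log\frac{f_t}{f_{t'}}$, Assumption~\ref{cntyinid}(a) makes $\Phi$ continuous on $\mathcal{X}\times[-M,M]^2$; since $\theta^*\in\Theta$ is continuous, the map $\psi({\bf x},t):=E_{{\bf x}}\log\frac{f_{\theta^*_{{\bf x}}}}{f_t}=-\Phi({\bf x},t,\theta^*_{{\bf x}})$ is continuous on the compact set $\mathcal{X}\times[-M,M]$ and vanishes at $t=\theta^*_{{\bf x}}$. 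Upgrading this to uniform continuity via compactness of $\mathcal{X}$ (Assumption~\ref{compactinid}), I obtain $\eta>0$ so that the sup-norm ball $W=\{\theta\in\Theta:d(\theta,\theta^*)<\eta\}$ satisfies $0\le E_{{\bf x}}\log\frac{f_{\theta^*_{{\bf x}}}}{f_{\theta_{{\bf x}}}}<\epsilon$ for every ${\bf x}\in\mathcal{X}$ and every $\theta\in W$; the lower bound is the defining property of the KL-minimizer in Assumption~\ref{existenceposinid}. Because $\theta^*$ lies in the sup-norm support of $\Pi$ (Assumption~\ref{existenceposinid}), $\Pi(W)>0$.

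\textbf{Step 2 (Jensen and reduction to a single strong law).} Restricting the integral to $W$ and applying Jensen's inequality to the probability measure $d\Pi(\theta)/\Pi(W)$ gives
\[ \frac{1}{n}\log R'_{2n}\ \ge\ \frac{\log\Pi(W)}{n}+\frac{1}{\Pi(W)}\int_W\frac{1}{n}\sum_{i=1}^n\log\frac{f_{\theta_{{\bf x}_i}}(Y_i)}{f_{\theta^*_{{\bf x}_i}}(Y_i)}\,d\Pi(\theta). \]
Rather than pushing a $\liminf$ through the integral, I integrate $\theta$ out first. Set $Z_i(\theta)=\log\frac{f_{\theta_{{\bf x}_i}}(Y_i)}{f_{\theta^*_{{\bf x}_i}}(Y_i)}$ and $\tilde Z_i=\int_W Z_i(\theta)\,d\Pi(\theta)/\Pi(W)$; Fubini applies because Assumption~\ref{cntyinid}(b) bounds $E\,Z_i(\theta)^2$ by a constant $C$, so $E|Z_i(\theta)|\le\sqrt{C}$ is integrable over the finite measure on $W$. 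The $\tilde Z_i$ are independent (measurable functions of the independent $Y_i$), have means $E\tilde Z_i=-\int_W E_{{\bf x}_i}\log\frac{f_{\theta^*_{{\bf x}_i}}}{f_{\theta_{{\bf x}_i}}}\,d\Pi(\theta)/\Pi(W)\in(-\epsilon,0]$ by Step 1, and by Jensen's inequality satisfy $\mathrm{Var}(\tilde Z_i)\le\int_W\mathrm{Var}(Z_i(\theta))\,d\Pi(\theta)/\Pi(W)\le C$.

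\textbf{Step 3 (conclusion).} Since the $\tilde Z_i$ are independent with uniformly bounded variances, $\sum_i\mathrm{Var}(\tilde Z_i)/i^2\le C\sum_i i^{-2}<\infty$, so Kolmogorov's strong law for independent variables yields $\frac1n\sum_{i=1}^n(\tilde Z_i-E\tilde Z_i)\to 0$ a.s.\ $P_0$; together with $\frac1n\sum_i E\tilde Z_i\ge-\epsilon$ this gives $\liminf_n\frac1n\sum_i\tilde Z_i\ge-\epsilon$ a.s. Feeding this into the display of Step 2 and using $\frac1n\log\Pi(W)\to 0$ gives $\liminf_n\frac1n\log R'_{2n}\ge-\epsilon$, whence $\liminf_n\frac1n\log(e^{n\beta}R'_{2n})\ge\beta-\epsilon>0$ and $e^{n\beta}R'_{2n}\to\infty$ a.s. The main obstacle is Step 1: converting a sup-norm neighborhood of $\theta^*$ into a \emph{uniform-in-${\bf x}$} bound on the expected log-likelihood ratio, which is precisely where joint continuity (Assumption~\ref{cntyinid}(a)) and compactness of $\mathcal{X}$ are indispensable. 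The device of integrating $\theta$ out before applying the strong law (Step 2) is what makes the i.n.i.d.\ averaging tractable, replacing the delicate interchange of $\liminf$ and integration by a single application of the i.n.i.d.\ SLLN enabled by the second-moment bound in Assumption~\ref{cntyinid}(b).
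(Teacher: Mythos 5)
Your proof is correct, and its first half (the choice of a sup-norm ball $W$ on which $E_{{\bf x}}\log\frac{f_{\theta^*_{{\bf x}}}}{f_{\theta_{{\bf x}}}}<\epsilon$ uniformly in ${\bf x}$, obtained from joint continuity plus compactness, followed by prior positivity of $W$) is exactly the paper's argument, which phrases the same fact as equicontinuity of the family $\{E_{{\bf x}_i}\log\frac{f_{\theta^*_{{\bf x}_i}}}{f_{\theta_{{\bf x}_i}}}\}$. Where you genuinely depart from the paper is in how the integral over $\theta$ and the limit in $n$ are interchanged. The paper follows the template of Lemma 4.4.1 of Ghosh and Ramamoorthi: for each fixed $\theta$ in the neighborhood, Kolmogorov's SLLN for independent non-identically distributed summands (enabled by the second-moment bound in Assumption \ref{cntyinid}(b)) gives $\sum_i\log\frac{f_{\theta_{{\bf x}_i}}(Y_i)}{f_{\theta^*_{{\bf x}_i}}(Y_i)}>-2n\epsilon$ eventually, and then Fatou's lemma pushes this through the integral to conclude $e^{n\beta}R'_{2n}\to\infty$. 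You instead apply Jensen's inequality to $\log R'_{2n}$ first and then invoke the i.n.i.d.\ SLLN a single time, to the prior-averaged variables $\tilde Z_i$; your variance bound $\mathrm{Var}(\tilde Z_i)\le\int_W\mathrm{Var}(Z_i(\theta))\,d\Pi(\theta)/\Pi(W)\le C$ is what makes this legitimate. The payoff of your route is that it sidesteps the Fubini-type bookkeeping of ``for $P_0$-a.e.\ $\omega$, for $\Pi$-a.e.\ $\theta$'' inherent in the Fatou argument, at the cost of giving a slightly weaker-looking (but entirely sufficient) lower bound $\liminf_n\frac1n\log R'_{2n}\ge-\epsilon$ rather than divergence of each integrand. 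One cosmetic remark: you invoke Assumption \ref{compactinid} in Step 1 even though the proposition lists only Assumptions \ref{existenceposinid} and \ref{cntyinid}; the paper's own proof also tacitly uses compactness of $\mathcal{X}\times[-M,M]^2$ to upgrade continuity to equicontinuity, so this is a fair (indeed more transparent) accounting of what is actually used, not a defect of your argument.
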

\begin{proposition}
 \label{numerinid}
 Suppose assumptions \ref{existenceposinid} to \ref{denseinid} hold.  Then for any $\theta'\in U^{c}, \ \exists$ an open set $A_{\theta'}$ containing $\theta'$ such that for some $\alpha \in (0,1)$, $\delta \in (0,1)$ and for any probability measure $\nu(\cdot)$ on $A_{\theta'}$, for all sufficiently large $n$, we have:
 \begin{eqnarray*}
 E\left[ \left(\int_{A_{\theta'}}\prod_{i=1}^{n}\frac{f_{\theta_{{\bf x}_i}}(Y_i)}{f_{\theta^*_{{\bf x}_i}}(Y_i)} d\nu(\theta) \right)^{\alpha}\right]< e^{-n\alpha\frac{\delta}{2}}.
 \end{eqnarray*}
 \end{proposition}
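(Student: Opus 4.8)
The plan is to follow the skeleton of Proposition \ref{numer}, but to replace its ``$n$-th power of a single term'' step by an averaging argument over the design points. First I would strip away the randomness and the inner integral. For $\alpha\in(0,1)$ the map $x\mapsto x^\alpha$ is concave, so Jensen's inequality applied to the probability measure $\nu$ gives
\[
\left(\int_{A_{\theta'}}\prod_{i=1}^n\frac{f_{\theta_{{\bf x}_i}}(Y_i)}{f_{\theta^*_{{\bf x}_i}}(Y_i)}\,d\nu(\theta)\right)^{\!\alpha}\le\int_{A_{\theta'}}\prod_{i=1}^n\left(\frac{f_{\theta_{{\bf x}_i}}(Y_i)}{f_{\theta^*_{{\bf x}_i}}(Y_i)}\right)^{\!\alpha}d\nu(\theta),
\]
and Tonelli's theorem together with the independence of the $Y_i$ turns the expectation of the right-hand side into $\int_{A_{\theta'}}\prod_{i=1}^n E_{{\bf x}_i}[(f_{\theta_{{\bf x}_i}}/f_{\theta^*_{{\bf x}_i}})^\alpha]\,d\nu(\theta)$. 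Setting $h_\alpha({\bf x},s):=\log E_{{\bf x}}[(f_s/f_{\theta^*_{{\bf x}}})^\alpha]$, it then suffices to exhibit $\alpha,\delta\in(0,1)$ and a sup-norm ball $A_{\theta'}$ about $\theta'$ for which $\sum_{i=1}^n h_\alpha({\bf x}_i,\theta_{{\bf x}_i})<-n\alpha\delta/2$ for every $\theta\in A_{\theta'}$ and all large $n$, since the integrand is then at most $e^{-n\alpha\delta/2}$ and $\nu$ integrates to one.

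Next I would pin down where $\theta'$ is genuinely far from $\theta^*$ and convert this into a Kullback--Leibler gap. By compactness of $\mathcal{X}$ and continuity of $\theta',\theta^*$ (Assumption \ref{compactinid}), the supremum defining $d(\theta',\theta^*)>\epsilon$ is attained at some ${\bf x}_0$ with $|\theta'_{{\bf x}_0}-\theta^*_{{\bf x}_0}|>\epsilon$, whence the contrapositive of Assumption \ref{KLindinid} gives $D({\bf x}_0,\theta'_{{\bf x}_0})\ge\delta_0$, where $D({\bf x},s):=E_{{\bf x}}\log(f_{\theta^*_{{\bf x}}}/f_s)$. Since $D$ is continuous in $({\bf x},s)$ by Assumption \ref{cntyinid}(a) and continuity of $\theta^*$, and $\theta'$ is continuous, I can fix a covariate ball $B=\{{\bf x}:\|{\bf x}-{\bf x}_0\|<\delta'\}$ and a small enough radius for $A_{\theta'}$ so that $D({\bf x},\theta_{{\bf x}})\ge\delta_0/2$ for all ${\bf x}\in B$, $\theta\in A_{\theta'}$. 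The KL-minimizing property in Assumption \ref{existenceposinid} forces $D({\bf x},s)\ge0$ for every $({\bf x},s)$, so the first-order (in $\alpha$) part of $h_\alpha$ is non-positive everywhere and strictly negative on $B$.

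The technical heart, and the step I expect to be the main obstacle, is a second-order expansion of $h_\alpha$ that is uniform in the covariate. Using $\log u\le u-1$ followed by a Taylor expansion of $\alpha\mapsto E_{{\bf x}}[(f_s/f_{\theta^*_{{\bf x}}})^\alpha]$ at $\alpha=0$, whose derivative there is precisely $-D({\bf x},s)$ (the interchange of derivative and expectation being justified by the second-moment bound of Assumption \ref{cntyinid}(b)), I would aim for
\[
h_\alpha({\bf x},s)\le-\alpha D({\bf x},s)+C\alpha^2,\qquad({\bf x},s)\in\mathcal{X}\times[-M,M],
\]
valid for all small $\alpha$ with a single constant $C$. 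That constant arises from bounding the remainder $E_{{\bf x}}[\log^2(f_s/f_{\theta^*_{{\bf x}}})\,(f_s/f_{\theta^*_{{\bf x}}})^u]$ uniformly over $({\bf x},s)$ and $u\in[0,\alpha]$; the elementary inequality $\log^2 x\le C_\beta(x^\beta+x^{-\beta})$ reduces this to boundedness of $E_{{\bf x}}[(f_s/f_{\theta^*_{{\bf x}}})^\gamma]$ for small exponents $\gamma$ (negative ones following by interchanging numerator and denominator), which follows from the continuity in Assumption \ref{cntyinid}(a) and compactness of $\mathcal{X}\times[-M,M]^2$. Securing this bound uniformly in ${\bf x}$ is exactly what tames the indices with ${\bf x}_i$ far from ${\bf x}_0$, where $\theta_{{\bf x}_i}$ may sit near $\theta^*_{{\bf x}_i}$ and $h_\alpha$ could otherwise be positive.

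Finally I would assemble the estimate by averaging. On $B$ the expansion reads $h_\alpha({\bf x}_i,\theta_{{\bf x}_i})\le-\alpha\delta_0/2+C\alpha^2$, and off $B$ it reads $h_\alpha({\bf x}_i,\theta_{{\bf x}_i})\le C\alpha^2$ because $D\ge0$. Assumption \ref{denseinid} ensures $\kappa:=\kappa({\bf x}_0,\delta')>0$, so for all large $n$ at least $\tfrac{\kappa}{2}n$ of the indices satisfy ${\bf x}_i\in B$, giving
\[
\sum_{i=1}^n h_\alpha({\bf x}_i,\theta_{{\bf x}_i})\le-\frac{\alpha\delta_0}{2}\cdot\frac{\kappa}{2}n+C\alpha^2n=n\alpha\Bigl(-\frac{\delta_0\kappa}{4}+C\alpha\Bigr),
\]
uniformly over $\theta\in A_{\theta'}$. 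Choosing $\alpha<\delta_0\kappa/(8C)$ makes the bracket at most $-\delta_0\kappa/8$, so the sum lies below $-n\alpha\delta/2$ with $\delta=\delta_0\kappa/4$, which is the required bound. The only point needing care is that $\alpha$ must be shrunk after the radii of $B$ and $A_{\theta'}$ have been fixed, so that the strictly negative contribution on $B$ dominates the $C\alpha^2$ remainder; once the uniform estimate of the previous paragraph is in hand, this is routine bookkeeping.
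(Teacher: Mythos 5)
Your proof is correct, but it takes a genuinely different route from the paper's. The paper anchors everything at the single function $\theta'$: it defines $A_{\theta'}=\{\theta:\ E_{{\bf x}_i}[f_{\theta_{{\bf x}_i}}/f_{\theta'_{{\bf x}_i}}]<e^{\delta/2}\ \forall i\}$ (open by an equicontinuity argument), proves exponential decay of $E\bigl(\prod_i f_{\theta'_{{\bf x}_i}}/f_{\theta^*_{{\bf x}_i}}\bigr)^{\alpha'}$ in a separate lemma (Lemma \ref{sepcondinid}, which obtains uniformity in $\alpha$ from Lemma 6.3 of \cite{Kleijn_van2006} plus Dini's theorem on the compact set $\mathcal{X}\times[-M,M]^2$), and then transfers the bound from $\theta'$ to the whole neighborhood via Cauchy--Schwartz followed by Jensen, using only the first-moment control built into the definition of $A_{\theta'}$. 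You instead take $A_{\theta'}$ to be a plain sup-norm ball, push the power $\alpha$ inside the $\nu$-integral by concavity, and then bound $\prod_i E_{{\bf x}_i}[(f_{\theta_{{\bf x}_i}}/f_{\theta^*_{{\bf x}_i}})^\alpha]$ \emph{uniformly over} $\theta\in A_{\theta'}$ via a second-order Taylor expansion of $\alpha\mapsto E_{\bf x}[(f_s/f_{\theta^*_{\bf x}})^\alpha]$ with a remainder constant that is uniform over $\mathcal{X}\times[-M,M]$; the averaging over design points and Assumption \ref{denseinid} then play the same role as in the paper's Lemma \ref{sepcondinid}. Your route buys a more self-contained and quantitative argument (no Cauchy--Schwartz loss, explicit choice of $\alpha$ in terms of $\delta_0$, $\kappa$, $C$), at the price of having to verify the uniform second-derivative bound; your reduction of that bound to moment estimates via $\log^2 u\le C_\beta(u^\beta+u^{-\beta})$ and the continuity/compactness in Assumption \ref{cntyinid}(a) is sound, and the nonnegativity $D({\bf x},s)\ge 0$ needed off the covariate ball $B$ does follow from Assumption \ref{existenceposinid}. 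The paper's route keeps the hard analysis confined to the single point $\theta'$ and needs only the $\alpha=1$ moment $E_{\bf x}[f_t/f_{t'}]$ to control the neighborhood, which is why its continuity hypotheses suffice verbatim. One small point of care in your version: the radius of $A_{\theta'}$ and of $B$ must be fixed \emph{before} shrinking $\alpha$, exactly as you note at the end, and the constant $\delta=\delta_0\kappa/4$ indeed lies in $(0,1)$ as required by the statement.
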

The proofs of propositions \ref{denominid} and \ref{numerinid} are discussed in section \ref{proofinid}. We now state the main theorem that gives the posterior consistency result for the i.n.i.d case.
 \begin{theorem}
 \label{thminid}
 Suppose that assumptions  \ref{existenceposinid} to \ref{denseinid} hold. Then,
 \[\Pi_n(U^c)\rightarrow \ 0 \ a.s. \ [P_0].\]
 \begin{proof} 
 Proof is similar to that of theorem \ref{thmiid} and is an immediate consequence of propositions  \ref{denominid} and \ref{numerinid}.
 \end{proof}
   \end{theorem}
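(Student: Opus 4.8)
The plan is to mirror the proof of Theorem~\ref{thmiid} essentially verbatim, feeding in Propositions~\ref{denominid} and~\ref{numerinid} in place of their i.i.d.\ counterparts and using the compactness supplied by Assumption~\ref{compactinid} for the final covering step. Since $\Pi_n(U^c)=R'_{1n}/R'_{2n}$, it suffices to localize the numerator around each point of $U^{c}$, show that on each small neighborhood the contribution to an exponentially inflated numerator vanishes almost surely, and then glue finitely many such neighborhoods together while the denominator grows faster than any fixed exponential rate.

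First I would fix an arbitrary $\theta'\in U^{c}$ and apply Proposition~\ref{numerinid} to obtain an open set $A_{\theta'}\ni\theta'$ together with constants $\alpha=\alpha(\theta')\in(0,1)$ and $\delta=\delta(\theta')\in(0,1)$ for which the stated moment bound holds for all sufficiently large $n$. Taking the probability measure there to be $\nu(\cdot)=\Pi(\cdot)/\Pi(A_{\theta'})$ and writing $\int_{A_{\theta'}}\prod_{i=1}^{n}\frac{f_{\theta_{{\bf x}_i}}(Y_i)}{f_{\theta^*_{{\bf x}_i}}(Y_i)}\,d\Pi(\theta)=\Pi(A_{\theta'})\int_{A_{\theta'}}(\cdots)\,d\nu(\theta)$, a Markov inequality at level $e^{n\delta/4}$ gives, for all large $n$,
\begin{eqnarray*}
&&P\!\left(\Big(e^{n\delta/4}\!\int_{A_{\theta'}}\prod_{i=1}^{n}\tfrac{f_{\theta_{{\bf x}_i}}(Y_i)}{f_{\theta^*_{{\bf x}_i}}(Y_i)}\,d\Pi(\theta)\Big)^{\alpha}>\epsilon^{\alpha}\right)\\
&&\qquad\le \frac{\Pi^{\alpha}(A_{\theta'})}{\epsilon^{\alpha}}\,e^{n\alpha\delta/4}\,e^{-n\alpha\delta/2}
=\frac{\Pi^{\alpha}(A_{\theta'})}{\epsilon^{\alpha}}\,e^{-n\alpha\delta/4}.
\end{eqnarray*}
The right-hand side is summable in $n$, so the Borel--Cantelli Lemma yields $e^{n\delta/4}\int_{A_{\theta'}}\prod_{i=1}^n\frac{f_{\theta_{{\bf x}_i}}(Y_i)}{f_{\theta^*_{{\bf x}_i}}(Y_i)}\,d\Pi(\theta)\to 0$ a.s.\ $P_0$; crucially, the fact that the moment bound holds only eventually in $n$ is harmless, since Borel--Cantelli depends only on the tail of the series.

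Next I would invoke Proposition~\ref{denominid} with $\beta=\delta/4$ to get $e^{n\delta/4}R'_{2n}\to\infty$ a.s.\ $P_0$. Dividing the numerator estimate by this denominator estimate gives $\Pi_n(A_{\theta'})\to 0$ a.s.\ $P_0$ for each fixed $\theta'$. Finally, since $U^{c}$ is closed (being the complement of the open neighborhood $U$) and $\Theta$ is compact by Assumption~\ref{compactinid}, the set $U^{c}$ is compact, so the open cover $\{A_{\theta'}:\theta'\in U^c\}$ admits a finite subcover $A_{\theta'_1},\dots,A_{\theta'_k}$; then $\Pi_n(U^c)\le\sum_{j=1}^{k}\Pi_n(A_{\theta'_j})\to 0$ a.s.\ $P_0$, which is the claim.

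The step I expect to carry the real weight is not this assembly---given the two propositions the argument is essentially the i.i.d.\ one---but rather the bookkeeping that the constants $\alpha,\delta$ and the neighborhood $A_{\theta'}$ returned by Proposition~\ref{numerinid} all depend on $\theta'$, and that the local bound is only eventual in $n$. In the finite-cover step this is resolved by working with one $A_{\theta'_j}$ at a time (so that each uses its own $\delta(\theta'_j)$ in the matching denominator bound $\beta=\delta(\theta'_j)/4$) and taking the maximum of the finitely many thresholds $n_0(\theta'_j)$; beyond that point all $k$ statements $\Pi_n(A_{\theta'_j})\to0$ hold simultaneously. The genuinely substantive content---uniform control of the local likelihood-ratio integrals across the independent but non-identically distributed observations, which is where Assumptions~\ref{cntyinid}, \ref{KLindinid} and the design condition~\ref{denseinid} enter---is precisely what Proposition~\ref{numerinid} packages, and is deferred to Section~\ref{proofinid}.
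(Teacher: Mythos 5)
Your proposal is correct and follows essentially the same route as the paper, which itself only remarks that the argument mirrors Theorem \ref{thmiid} using Propositions \ref{denominid} and \ref{numerinid}: Markov plus Borel--Cantelli on each $A_{\theta'}$, the denominator bound with $\beta=\delta(\theta')/4$, and a finite subcover of $U^c$ from the compactness of $\Theta$. Your explicit bookkeeping of the $\theta'$-dependence of $\alpha,\delta,A_{\theta'}$ and of the ``sufficiently large $n$'' caveat is a welcome addition the paper leaves implicit.
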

 
 \subsection{Example: Bayesian nonlinear quantile regression.}
\label{eg2} 
\cite{Koenker_basset1978} introduced Quantile Regression as a way to model any particular quantile of the response variable as a function of covariates. Given the response variable $Y_{i}$ and covariate vector ${\bf X}_{i}$ ($i=1,2,...,n$), this involves solving for $\boldsymbol{\beta}$ in the following problem.
\begin{eqnarray*}
\min_{\boldsymbol{\beta}} \sum_{i=1}^{n}\rho_{\tau}(Y_{i}-{\bf X}_{i}^{T}\boldsymbol{\beta}),
\end{eqnarray*}
where $\rho_{\tau}(u)= u(\tau - I_{(u \le 0)})$ with $I_{(\cdot)}$ being the indicator function and $0<\tau<1$. This can be formulated as a maximum likelihood estimation problem by assuming asymmetric Laplace distribution (ALD) for the response, {\it i.e.} $Y_{i} \sim ALD(., \mu_{i}^{\tau},\tau)$, where
\begin{eqnarray}
&&ALD(y; \mu^{\tau},\tau) = \tau (1-\tau) exp \left\{ -(y-\mu^{\tau})(\tau-I_{(y \le \mu^{\tau})}) \right\},   \ y \in(-\infty, \infty)
\end{eqnarray}
\cite{Yu_moyeed2001} proposed the idea of Bayesian quantile regression by assuming ALD for the response.  \cite{sriram.rvr.ghosh.2013} derive posterior consistency for Bayesian linear  quantile regression parameters based on ALD. In a recent unpublished manuscript \cite{sriram.rvr.ghosh.2013b} show among other things that the posterior consistency property holds also for a non-linear quantile regression model. We derive the result for the Bayesian nonlinear quantile regression as a special case of our formulation. 
 
  Consider a family of asymmetric Laplace densities (ALD), i.e. $\mathcal{F} =\{f_{t}: t \in [-M, M] \}$, where  $f_t(y)=ALD(y,t,\tau)$. Let the ``true" quantile function of $Y$ given covariate ${\bf X}$ be $\theta_0({\bf X})$. Assumptions \ref{compactinid} and \ref{denseinid} are on the parameter space and covariate space, which we will assume to hold.  Using similar arguments as in section \ref{eg1},  it is easy to see that assumption \ref{existenceposinid} is satisfied with $\theta^*=\theta_0(\cdot)$ and that if $|t-\theta^*_{{\bf x}_i}|> \epsilon$ then,
   \[ E_{{\bf x}_i}\log\frac{f_{\theta^*_{{\bf x}_i}}}{f_{\theta}}> \delta_{{\bf x}_i} = \frac{\epsilon}{2}\cdot \min\left\{P_{0{\bf x}_i}\left(0<Y_i-\theta^*_{{\bf x}_i}<\frac{\epsilon}{2}\right), P_{0{\bf x}_i}\left(-\frac{\epsilon}{2}<Y_i-\theta^*_{{\bf x}_i}<0\right) \right\}. \]
   If  $P_{0{\bf x}}\left(0<Y-\theta^*_{{\bf x}}<\frac{\epsilon}{2}\right)$ and $P_{0{\bf x}}\left(-\frac{\epsilon}{2}<Y-\theta^*_{{\bf x}}<0\right) $ (where $Y\sim P_{0{\bf x}}$) are continuous and positive functions of ${\bf x}$, then $\{\delta_{{\bf x}_i},\ i\geq 1\}$ can be uniformly bounded below by a positive number. Hence, assumption \ref{KLindinid} is satisfied. Similarly, since $\log\frac{f_t}{f_{t'}}$ is bounded by $|t-t'|$ and $\Theta$ is compact w.r.t sup-norm,  assumption \ref{cntyinid} is satisfied if $p_{{\bf x}}(y)$ is continuous in ${\bf x}$ for each $y$ and can be bounded by an integrable function.

\section{Details of proofs}
\label{proofinid}
In this section, we provide detailed proofs for the key results used in the previous sections. We start with the proof of proposition \ref{numer}.

\begin{proof}[Proof of proposition \ref{numer}]
Let $\theta_1 \in U^{c}$.  Since assumption \ref{KLind} holds, $E\log\frac{f_{\theta^*}(Y)}{f_{\theta_1}(Y)}> \delta$. By lemma 6.3 of \cite{Kleijn_van2006}, we have $\lim_{\alpha\downarrow 0} \frac{1-E\left( \frac{f_{\theta_1}}{f_{\theta^*}}\right)^{\alpha}}{\alpha}\geq \delta$. Therefore, $\exists \ \alpha' \in (0,1)$ such that $\frac{1-E\left( \frac{f_{\theta_1}}{f_{\theta^*}}\right)^{\alpha'}}{\alpha'}> \frac{\delta}{2}$ and hence: 
\begin{equation}
\label{step1}
E\left( \frac{f_{\theta_1}}{f_{\theta^*}}\right)^{\alpha'}<1-\alpha'\frac{\delta}{2} < e^{-\alpha'\frac{\delta}{2}}.
\end{equation}
 Now, define $A_{\theta_1}:= \left\{\theta \in \Theta : \ E\left[\frac{f_{\theta}}{f_{\theta_1}} \right]< e^{\frac{\delta}{2}} \right\}$. This set clearly contains $\theta_1$ and is an open set (by continuity of $E\left[\frac{f_{\theta}}{f_{\theta_1}}\right]$ as per assumption \ref{cnty}).  Let $\alpha=\frac{\alpha'}{2}$ and $\nu(\cdot)$ be any probability measure on $A_{\theta_1}$. Part (a) is established by the following inequality:
\begin{eqnarray*}
&& E\left[ \left(\int_{A_{\theta_1}}\frac{f_{\theta}}{f_{\theta^*}} d\nu(\theta)\right)^{\alpha}\right]=E\left[ \left(\frac{f_{\theta_1}}{f_{\theta^*}} \right)^{\alpha}\cdot \left(\int_{A_{\theta_1}}\frac{f_{\theta}}{f_{\theta_1}} d\nu(\theta)\right)^{\alpha}\right]\\
&&  \mbox{ (by Cauchy-Schwartz inequality)}\\
&&\leq \left(E\left[ \left(\frac{f_{\theta_1}}{f_{\theta^*}}\right)^{2\alpha}\right]\right)^{\frac{1}{2}} \cdot \left(E\left[ \left(\int_{A_{\theta_1}}\frac{f_{\theta}}{f_{\theta_1}} d\nu(\theta)\right)^{2\alpha}\right]\right)^{\frac{1}{2}}\\
&& \mbox{ (by Jensen's inequality on 2nd term)}\\
&&\leq \left(E\left[ \left(\frac{f_{\theta_1}}{f_{\theta^*}}\right)^{2\alpha}\right]\right)^{\frac{1}{2}} \cdot \left(\int_{A_{\theta_1}}E\left[ \frac{f_{\theta}}{f_{\theta_1}}\right]d\nu(\theta)\right)^{\alpha} \\ 
&&< e^{-\alpha\frac{\delta}{2}} \mbox{     (by equation \ref{step1} and definition of $A_{\theta_1}$)}.
\end{eqnarray*}
Part (b) can be shown using induction on $n$. Note that part (a) corresponds to $n=1$. Assume that the result holds for $n=k$. Then,
\begin{eqnarray*}
&&E\left[ \left(\int_{A_{\theta_1}} \prod_{i=1}^{k+1} \frac{f_{\theta}(y_i)}{f_{\theta^*}(y_i)}d\nu(\theta)\right)^{\alpha}\right]\\
&&=E\left[ \left. E\left[ \left(\int_{A_{\theta_1}}\frac{f_{\theta}(y_{k+1})}{f_{\theta^*}(y_{k+1})} d\nu_{y_1,y_2, \cdots, y_k}(\theta)\right)^{\alpha}  \right\vert Y_1,Y_2,\cdots, Y_k\right]  \cdot \left(\int_{A_{\theta_1}} \prod_{i=1}^{k} \frac{f_{\theta}(y_i)}{f_{\theta^*}(y_i)}d\nu(\theta)\right)^{\alpha}\right]\\
&&\mbox{where }  d\nu_{y_1,y_2, \cdots, y_k}(\theta)= \frac{\prod_{i=1}^{k} \frac{f_{\theta}(y_{i})}{f_{\theta^*}(y_{i})}d\nu(\theta)}{\int_{A_{\theta_1}}\prod_{i=1}^{k} \frac{f_{\theta}(y_{i})}{f_{\theta^*}(y_{i})}d\nu(\theta)}.
\end{eqnarray*}
Now, the result is obtained by applying part (a) on the first conditional expectation term and the induction hypothesis for n=k on the second term.
\end{proof}

We  now provide the details of the steps leading to propositions \ref{denominid} and \ref{numerinid}.  Similar to the i.i.d case, the first proposition is to handle the denominator $R'_{2n}$ of the posterior probability.

\begin{proof}[Proof of proposition \ref{denominid}.]~\\
From part (a) of assumption \ref{cntyinid}, it follows that the  collection $\left\{ E_{\bf x}\log\frac{f_{\theta^{*}({\bf x}_i)}}{f_{\theta({\bf x}_i)}}, \ i\geq 1\right\} $ is equi-continuous w.r.t. $\theta \in \Theta$. Part (b) implies that $\left\{ E_{\bf x}\log^2\frac{f_{\theta^{*}({\bf x}_i)}}{f_{\theta({\bf x}_i)}}, \ i\geq 1\right\}$ is uniformly bounded. Hence, $\exists \ \delta\in(0,1)$ such that 
 \begin{eqnarray*}
 &&\left\{\sup_{{\bf x}\in \mathcal{X}}|\theta({\bf x})-\theta_1({\bf x})|<\delta \right\}\\
 && \subseteq V_{\epsilon}= \left\{ \theta : \sup_{i\geq 1} E_{{\bf x}_i}\log\frac{f_{\theta^*_{{\bf x}_i}}(Y_i)}{f_{\theta_{{\bf x}_i}}(Y_i)}<\epsilon, \ \sum_{i=1}^{\infty} \frac{1}{i^2}E_{{\bf x}_i}\left(\log\frac{f_{\theta^*_{{\bf x}_i}}(Y_i)}{f_{\theta_{{\bf x}_i}}(Y_i)}\right)^2<\infty\right\} 
 \end{eqnarray*}
 Assumption \ref{existenceposinid} will therefore ensure that the prior gives positive mass for the set $V_{\epsilon}$. Now, observing that $R'_{2n}\geq \int_{V_{\epsilon}}e^{\sum_{i=1}^{n}\log\left(\frac{f_{\theta_{{\bf x}_i}}(Y_i)}{f_{\theta^*_{{\bf x}_i}}(Y_i)} \right)}d\Pi(\theta) $ and an application of strong law of large numbers for independent random variables  leads to  \[\sum_{i=1}^{n}\log\left(\frac{f_{\theta_{{\bf x}_i}}(Y_i)}{f_{\theta^*_{{\bf x}_i}}(Y_i)} \right)>-2n\epsilon \ \ a.s. \]
Rest of the proof is in the lines of Lemma 4.4.1 of \cite{Ghosh_RVR2003}.
\end{proof}

The next two lemmas help in proving proposition \ref{numerinid}. The  lemma below essentially formalizes the fact that if the functions $\theta$ and $\theta_0$ differ at a point ${\bf x}_0$, then they will necessarily differ on a neighborhood around ${\bf x}_0$ as well.
 \begin{lemma}
 \label{lemnhbd}
 Suppose assumption \ref{compactinid} holds. Let $\theta' \in U^c$ and ${\bf x}_0 \in \mathcal{X}$ be such that $|\theta'_{{\bf x}_0}-\theta^*_{{\bf x}_0}|>\epsilon$. Then  $\exists \ \delta'$ such that $\forall \ {\bf x} \ : \ \|{\bf x}-{\bf x}_0\|<\delta'$ we have $|\theta'_{\bf x}-\theta^*_{\bf x}| \geq \frac{\epsilon}{2}$.
 \begin{proof}
 
 Since assumption \ref{compactinid} holds, by Arzela-Ascoli theorem, we have the following:
 \begin{itemize}
 \item[{(i)}]  $\Theta$ is uniformly bounded, i.e. $\exists \ M$ such that $|\theta({\bf x})|\leq M$ $\ \forall \ \theta\in \Theta$ and ${\bf x}\in \mathcal{X}$.
 \item[{(ii)}]$\Theta$ is equi-uniformly-continuous, i.e. for ${\bf x}_0\in\mathcal{X}$, given $\epsilon>0$, $\exists \ \delta>0$ such that $\forall \ {\bf x} : \ \|{\bf x}-{\bf x}_0\|<\delta$, $|\theta_{\bf x}-\theta_{{\bf x}_0}|$ $<\epsilon$,  $\ \forall \ \theta\in\Theta$.
 \end{itemize}
 
 Without loss of generality, for $\theta'\in U^c$, we have $\theta'_{{\bf x}_0}-\theta^*_{{\bf x}_0}>\epsilon$. By (ii) above,i.e. equicontinuity, $\exists \ \delta'$ such that $\forall \ \|{\bf x}-{\bf x}_0\|<\delta'$, we have $|\theta_{\bf x}-\theta_{{\bf x}_0}|<\frac{\epsilon}{4}$, $\forall \ \theta\in\Theta$ . In particular, for such ${\bf x}$,  $|\theta^*_{{\bf x}}-\theta^*_{{\bf x}_0}|<\frac{\epsilon}{4}$.  Therefore,
 \begin{eqnarray*}
  \theta'_{\bf x}-\theta^*_{\bf x} = \theta'_{\bf x}-\theta'_{{\bf x}_0} + \theta'_{{\bf x}_0}-\theta^*_{{\bf x}_0} +\theta^*_{{\bf x}_0}-\theta^*_{{\bf x}}\geq  -\frac{\epsilon}{4} + \epsilon - \frac{\epsilon}{4} = \frac{\epsilon}{2}.
 \end{eqnarray*}
 \end{proof}
 \end{lemma}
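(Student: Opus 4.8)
The plan is to exploit the fact that the entire difficulty here is purely topological: both $\theta'$ and $\theta^*$ belong to $\Theta$ and are therefore continuous functions on $\mathcal{X}$, and the conclusion merely says that the continuous map ${\bf x}\mapsto \theta'_{\bf x}-\theta^*_{\bf x}$ cannot drop from a value of magnitude exceeding $\epsilon$ at ${\bf x}_0$ to a value of magnitude below $\epsilon/2$ without first traversing a neighborhood of ${\bf x}_0$. So the strategy is to isolate a ball around ${\bf x}_0$ on which each of $\theta'$ and $\theta^*$ varies by less than $\epsilon/4$, and then read off the bound from the triangle inequality.

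First I would record the consequences of Assumption \ref{compactinid}. Since $\mathcal{X}$ is compact and $\Theta$ is a compact subset of the continuous functions under the sup-norm, the Arzel\`a-Ascoli theorem gives that $\Theta$ is uniformly bounded and equi-uniformly-continuous. The only feature I actually need is equicontinuity at the single point ${\bf x}_0$: for tolerance $\epsilon/4$ there is a $\delta'>0$, valid \emph{simultaneously} for every $\theta\in\Theta$, such that $\|{\bf x}-{\bf x}_0\|<\delta'$ implies $|\theta_{\bf x}-\theta_{{\bf x}_0}|<\epsilon/4$. Applying this to $\theta'$ and to $\theta^*$ gives two estimates valid on the \emph{same} $\delta'$-ball, which is the principal advantage of routing through equicontinuity rather than invoking continuity of the two functions separately.

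Next I would carry out a without-loss-of-generality reduction. The hypothesis is $|\theta'_{{\bf x}_0}-\theta^*_{{\bf x}_0}|>\epsilon$; interchanging the roles of $\theta'$ and $\theta^*$ if necessary, assume $\theta'_{{\bf x}_0}-\theta^*_{{\bf x}_0}>\epsilon$. Then for any ${\bf x}$ in the $\delta'$-ball I would split
\[
\theta'_{\bf x}-\theta^*_{\bf x} = \left(\theta'_{\bf x}-\theta'_{{\bf x}_0}\right) + \left(\theta'_{{\bf x}_0}-\theta^*_{{\bf x}_0}\right) + \left(\theta^*_{{\bf x}_0}-\theta^*_{\bf x}\right),
\]
bound the first and third terms below by $-\epsilon/4$ each via the equicontinuity estimate, and bound the middle term below by $\epsilon$ via the (reduced) hypothesis. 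This yields $\theta'_{\bf x}-\theta^*_{\bf x}\geq -\epsilon/4+\epsilon-\epsilon/4=\epsilon/2$, hence $|\theta'_{\bf x}-\theta^*_{\bf x}|\geq \epsilon/2$, as required.

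I do not expect a genuine obstacle; the lemma is in essence a continuity statement dressed up for the i.n.i.d.\ setting. The only points needing care are (i) ensuring a single radius $\delta'$ controls both functions at once, which is precisely what equicontinuity (as opposed to pointwise continuity of each function) delivers, and (ii) keeping the signs straight in the three-term decomposition so that the direction of the strict inequality chosen in the reduction survives. A purely continuity-based alternative would work equally well---set $g=\theta'-\theta^*$, note $|g({\bf x}_0)|>\epsilon$, and apply continuity of $g$ at ${\bf x}_0$ with tolerance $\epsilon/2$---but passing through Arzel\`a-Ascoli keeps the argument consistent with the way equicontinuity is already invoked in the proof of Proposition \ref{denominid}.
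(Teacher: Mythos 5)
Your proof is correct and follows essentially the same route as the paper's: Arzel\`a--Ascoli for equicontinuity of $\Theta$, a without-loss-of-generality sign reduction, and the identical three-term decomposition with $\epsilon/4$ tolerances. The remark that plain continuity of $g=\theta'-\theta^*$ at ${\bf x}_0$ would suffice is a fair observation, but it does not change the substance of the argument.
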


 \begin{lemma}
 \label{sepcondinid}
 Let $U^{c}=\left\{\theta: \ \sup_{{\bf x} \in \mathcal{X}}|\theta({\bf x})-\theta^*({\bf x})|>\epsilon \right\}$.  If  assumptions \ref{existenceposinid} to \ref{denseinid} hold, then  $\exists$  $\delta_1\in (0,1) $  such that for every $\theta'\in U^{c}$, an $ \alpha'\in (0,1)$ can be chosen such that 
 $E  \left(\prod_{i=1}^{n}\frac{f_{\theta'_{{\bf x}_i}}(Y_i)}{f_{\theta^*_{{\bf x}_i}}(Y_i)} \right)^{\alpha'} < e^{-n\alpha' \delta}\ $ for all sufficiently large $n$.
 \begin{proof}
  For $\theta'\in U^{c}$, let ${\bf x}_0$ be such that $|\theta'({\bf x}_0)-\theta^*({\bf x}_0)|>\epsilon$. Then by lemma \ref{lemnhbd}, $\exists \ \delta'$ such that $\forall \ {\bf x} \in A_{{\bf x}_0,\delta'}:= \{ {\bf x} \ : \ \|{\bf x}-{\bf x}_0\|<\delta'\}$, we have $|\theta'_{\bf x}-\theta^*_{\bf x}| \geq \frac{\epsilon}{2}$. Therefore, by assumption \ref{KLindinid}, $\exists \ \delta \in (0,1)$ such that $E_{{\bf x}}\log\frac{f_{\theta^*_{\bf x}}}{f_{\theta'_{{\bf x}}}} \geq \delta$ for all  ${\bf x} \in A_{{\bf x}_0, \delta'}$.  
  
For $({\bf x},t,t') \in\mathcal{X}\times [-M,M]^2$, let $ g_{\alpha}({\bf x},t,t'):= \frac{1-E_{\bf x}\left( \frac{f_{t}}{f_{t'}}\right)^{\alpha}}{\alpha}$. By Lemma 6.3 of \cite{Kleijn_van2006}, we have that, $g_{\alpha}({\bf x},t,t') $ increases to $E_{\bf x}\log\frac{f_{t'}}{f_{t}} $ as $\alpha \downarrow 0$. By assumption \ref{cntyinid}, both $g_\alpha(\cdot,\cdot,\cdot)$ and the limiting function are continuous in $({\bf x},t,t')$, which is in the compact set $\mathcal{X}\times [-M,M]^2$. Hence, it follows by Dini's theorem that this convergence is uniform. i.e.,
  \[ \lim_{\alpha\downarrow 0} \frac{1-E_{\bf x}\left( \frac{f_{t}}{f_{t'}}\right)^{\alpha}}{\alpha}\uparrow E_{\bf x}\log\frac{f_{t'}}{f_{t}} \ \mbox{uniformly on } \mathcal{X}\times [-M,M]^2.  \] 
   Let  $\kappa:=\kappa({\bf x}_0,\delta')$ as in the assumption \ref{denseinid}. Then,   $\exists \ 0<\alpha'<1$ such that $g_{\alpha'}({\bf x},t,t')> E_{\bf x}\log\frac{f_{t'}}{f_{t}}- \kappa\frac{\delta}{2}, \forall \ ({\bf x},t,t')\in \  \mathcal{X}\times [-M,M]^2$. In particular, $g_{\alpha'}({\bf x}_i,\theta_{{\bf x}_i},\theta^*_{{\bf x}_i})\geq E_{{\bf x}_i}\log\frac{f_{\theta^*_{{\bf x}_i}}}{f_{\theta_{{\bf x}_i}}}-\kappa\frac{\delta}{2}$ $\forall \ i\geq \ 1, \theta\in\Theta$.  Also, in general $E_{{\bf x}_i}\log\frac{f_{\theta^*_{{\bf x}_i}}}{f_{\theta_{{\bf x}_i}}}\geq 0$. Combining this with the observation we made at the beginning of the proof that $E_{{\bf x}}\log\frac{f_{\theta^*_{\bf x}}}{f_{\theta'_{{\bf x}}}} \geq \delta$   $ \forall \ x\in A_{x_0, \delta'}$, we get:
    \begin{equation}
    \label{stepp}
    g_{\alpha'}({\bf x}_i,\theta'_{{\bf x}_i},\theta^*_{{\bf x}_i})\geq \delta\cdot I_{A_{{\bf x}_0,\delta'}}({\bf x}_i)-\kappa\frac{\delta}{2},
    \end{equation}
where $I_{A_{{\bf x}_0,\delta'}}({\bf x})$ is the indicator function which is $1$ when ${\bf x}\in A_{{\bf x}_0, \delta'}$ and $0$ otherwise. Note that, by assumption \ref{denseinid}, for sufficiently large n, $\frac{1}{n}\sum_{i=1}^{n}I_{A_{{\bf x}_0, \delta'}}({\bf x}_i) > \frac{3\kappa}{4}$. Using this along with a bit of algebra on equation (\ref{stepp}), we can conclude that the following inequality holds  for sufficiently large $n$:
  \begin{eqnarray*}
   E\left(\prod_{i=1}^{n} \frac{f_{\theta'_{{\bf x}_i}}(Y_i)}{f_{\theta^*_{{\bf x}_i}}(Y_i)}\right)^{\alpha'} && \leq e^{-\delta\sum_{i=1}^{n}\cdot I_{A_{{\bf x}_0,\delta'}}({\bf x}_i)+n\kappa\frac{\delta}{2}. } \leq e^{-n\kappa\frac{\delta}{4}}.
   \end{eqnarray*}
 The result follows by assigning $\delta_1:= \kappa\frac{\delta}{4}$.
 \end{proof}
 \end{lemma}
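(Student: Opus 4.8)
The plan is to localise the sup-norm separation of $\theta'$ from $\theta^*$ to a neighbourhood in covariate space, convert that separation into a Kullback--Leibler lower bound, and then upgrade the KL bound to a uniform R\'enyi-type affinity bound that survives taking the product over $i$. I would begin by fixing $\theta'\in U^c$ and choosing ${\bf x}_0\in\mathcal{X}$ with $|\theta'_{{\bf x}_0}-\theta^*_{{\bf x}_0}|>\epsilon$. Lemma \ref{lemnhbd} then supplies a radius $\delta'>0$ such that $|\theta'_{\bf x}-\theta^*_{\bf x}|\geq \epsilon/2$ for all ${\bf x}\in A_{{\bf x}_0,\delta'}$, and assumption \ref{KLindinid} upgrades this to $E_{\bf x}\log\frac{f_{\theta^*_{\bf x}}}{f_{\theta'_{\bf x}}}\geq\delta$ for some $\delta\in(0,1)$, uniformly over ${\bf x}\in A_{{\bf x}_0,\delta'}$.

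The core step is to pass from KL divergence to a quantity I can exponentiate. For $({\bf x},t,t')\in\mathcal{X}\times[-M,M]^2$ set $g_\alpha({\bf x},t,t')=\frac{1-E_{\bf x}(f_t/f_{t'})^\alpha}{\alpha}$. By Lemma 6.3 of \cite{Kleijn_van2006}, $g_\alpha\uparrow E_{\bf x}\log\frac{f_{t'}}{f_t}$ monotonically as $\alpha\downarrow 0$, and by the continuity in assumption \ref{cntyinid}(a) together with compactness of $\mathcal{X}\times[-M,M]^2$ (assumption \ref{compactinid}), Dini's theorem makes this convergence uniform. Hence a single $\alpha'\in(0,1)$ can be chosen with $g_{\alpha'}({\bf x},t,t')>E_{\bf x}\log\frac{f_{t'}}{f_t}-\kappa\frac{\delta}{2}$ everywhere, where $\kappa=\kappa({\bf x}_0,\delta')>0$ is the frequency constant from assumption \ref{denseinid}. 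Specialising to $t=\theta'_{{\bf x}_i}$, $t'=\theta^*_{{\bf x}_i}$ and combining the general nonnegativity $E_{{\bf x}_i}\log\frac{f_{\theta^*_{{\bf x}_i}}}{f_{\theta'_{{\bf x}_i}}}\geq 0$ (which holds since $\theta^*$ is the pointwise KL minimiser by assumption \ref{existenceposinid}) with the sharper bound $\delta$ available on the neighbourhood yields the key inequality $g_{\alpha'}({\bf x}_i,\theta'_{{\bf x}_i},\theta^*_{{\bf x}_i})\geq\delta\, I_{A_{{\bf x}_0,\delta'}}({\bf x}_i)-\kappa\frac{\delta}{2}$.

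To finish, I would exploit independence to factor the expectation as $E\big(\prod_{i=1}^n f_{\theta'_{{\bf x}_i}}(Y_i)/f_{\theta^*_{{\bf x}_i}}(Y_i)\big)^{\alpha'}=\prod_{i=1}^n\big(1-\alpha' g_{\alpha'}({\bf x}_i,\theta'_{{\bf x}_i},\theta^*_{{\bf x}_i})\big)$, and apply $1-u\leq e^{-u}$ to pass to $\exp\big(-\alpha'\sum_{i=1}^n g_{\alpha'}({\bf x}_i,\theta'_{{\bf x}_i},\theta^*_{{\bf x}_i})\big)$. Substituting the key inequality bounds the exponent by $-\alpha'\big(\delta\sum_{i=1}^n I_{A_{{\bf x}_0,\delta'}}({\bf x}_i)-n\kappa\frac{\delta}{2}\big)$, and assumption \ref{denseinid} guarantees $\frac1n\sum_{i=1}^n I_{A_{{\bf x}_0,\delta'}}({\bf x}_i)>\frac{3\kappa}{4}$ for all large $n$, so the exponent is at most $-n\alpha'\kappa\frac{\delta}{4}$. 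Setting $\delta_1=\kappa\frac{\delta}{4}$ delivers the claim.

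I expect the main obstacle to be the uniformity step: the inequality must hold with one fixed $\alpha'$ simultaneously for every triple $({\bf x}_i,\theta'_{{\bf x}_i},\theta^*_{{\bf x}_i})$, which is precisely what Dini's theorem provides, but only because continuity (assumption \ref{cntyinid}) and compactness (assumption \ref{compactinid}) are in force; without them the admissible $\alpha'$ could degenerate to $0$ as $i$ varies, destroying the uniform decay rate. A secondary subtlety is that the separation $\delta$ is guaranteed only inside $A_{{\bf x}_0,\delta'}$, so the argument genuinely needs the frequency condition \ref{denseinid} to ensure that a positive fraction of the design points fall in this neighbourhood and contribute the decay, while the remaining points are controlled merely through KL nonnegativity.
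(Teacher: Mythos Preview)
Your proposal is correct and follows the paper's proof essentially line for line: the same localisation via Lemma~\ref{lemnhbd}, the same $g_\alpha$ and appeal to Lemma~6.3 of \cite{Kleijn_van2006}, the same use of Dini's theorem on the compact set $\mathcal{X}\times[-M,M]^2$ to obtain a single $\alpha'$, the same key inequality $g_{\alpha'}\geq \delta\,I_{A_{{\bf x}_0,\delta'}}-\kappa\delta/2$, and the same finish via independence, $1-u\leq e^{-u}$, and assumption~\ref{denseinid}. Your write-up in fact makes the ``bit of algebra'' in the paper explicit and tracks the $\alpha'$ factor in the exponent more carefully than the paper does.
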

 
 \begin{proof}[Proof of proposition \ref{numerinid}.]~\\
  First, we claim by assumption \ref{cntyinid} that the collection of functions  $\left\{ E_{\bf x}\frac{f_{\theta({\bf x}_i)}}{f_{\theta'({\bf x}_i)}}, \ i\geq 1\right\} $ is equi-continuous w.r.t the sup-norm metric on $\Theta$. 
 Note that $E_{\bf x}\frac{f_t}{f_{t'}}$ is a  continuous function on a compact set $\mathcal{X}\times [-M,M]^2$. Hence, it is uniformly continuous. So, given $\epsilon>0$, $\exists \  \delta$ such that if $\|{\bf x}-{\bf x}_1\|<\delta$ , $|t-t_1|<\delta$ and $|t'-t'_1|<\delta$ then  $\left\vert E_{{\bf x}_1}\frac{f_{t_1}}{f_{t'_1}}-E_{\bf x}\frac{f_t}{f_{t'}} \right\vert <\epsilon$. In particular, let $\theta, \theta_1 \in \Theta$ be such that $\sup_{{\bf x}\in \mathcal{X}}|\theta({\bf x})-\theta_1({\bf x})|<\delta$. Then for any ${\bf x} \in \mathcal{X}$, taking ${\bf x}_1={\bf x}$, $t'=t'_1=\theta({\bf x})$ and $t=\theta({\bf x})$, $t_1=\theta_1({\bf x})$, we get  $\left\vert E_{\bf x}\frac{f_{\theta({\bf x})}}{f_{\theta'({\bf x})}}- E_{\bf x}\frac{f_{\theta_1({\bf x})}}{f_{\theta'({\bf x})}}\right\vert <\epsilon$. Hence the collection of functions  $\left\{ E_{\bf x}\frac{f_{\theta({\bf x}_i)}}{f_{\theta'({\bf x}_i)}}, \ i\geq 1\right\} $ is equicontinuous in $\theta$ w.r.t supnorm metric.
 
  Define $A_{\theta'}:= \left\{\theta \in \Theta : \ E_{{\bf x}_i}\left[\frac{f_{\theta_{{\bf x}_i}}}{f_{\theta'_{{\bf x}_i}}} \right]< e^{\frac{\delta}{2}} \ ,\forall i\geq 1\right\}$. This set clearly contains $\theta'$ and it is an open set due to  equi-continuity. By lemma \ref{sepcondinid},  $\exists \ \alpha'\in (0,1)$  such that
  \[  E  \left(\prod_{i=1}^{n}\frac{f_{\theta'_{{\bf x}_i}}(Y_i)}{f_{\theta^*_{{\bf x}_i}}(Y_i)} \right)^{\alpha'} < e^{-n\alpha' \delta}\ \ \mbox{ for all sufficiently large } n. \]  
   Let $\alpha=\alpha'/2$. Then, for sufficiently large $n$, 
  \begin{eqnarray*}
 &&E\left[ \left(\int_{A_{\theta'}}\prod_{i=1}^{n}\frac{f_{\theta_{{\bf x}_i}}(Y_i)}{f_{\theta^*_{{\bf x}_i}}(Y_i)} d\nu(\theta) \right)^{\alpha}\right]\\
&&= E\left[\left( \frac{f_{\theta'_{{\bf x}_i}}(Y_i)}{f_{\theta^*_{{\bf x}_i}}(Y_i)}  \right)^{\alpha} \left(\int_{A_{\theta'}}\prod_{i=1}^{n}\frac{f_{\theta_{{\bf x}_i}}(Y_i)}{f_{\theta'_{{\bf x}_i}}(Y_i)} d\nu(\theta) \right)^{\alpha}\right] \\
&& \mbox{(By Cauchy-Schwartz inequality)}\\
&&\leq \left(E\left[\left( \frac{f_{\theta'_{{\bf x}_i}}(Y_i)}{f_{\theta^*_{{\bf x}_i}}(Y_i)}  \right)^{2\alpha} \right]\right)^{\frac{1}{2}} \cdot \left(E\left[\left(\int_{A_{\theta'}}\prod_{i=1}^{n}\frac{f_{\theta_{{\bf x}_i}}(Y_i)}{f_{\theta'_{{\bf x}_i}}(Y_i)} d\nu(\theta) \right)^{2\alpha}\right]\right)^{\frac{1}{2}} \\
&&\mbox{(By Jensen's inequality)}\\
&&\leq \left(E\left[\left( \frac{f_{\theta'_{{\bf x}_i}}(Y_i)}{f_{\theta^*_{{\bf x}_i}}(Y_i)}  \right)^{\alpha'} \right]\right)^{\frac{1}{2}} \cdot \left(\int_{A_{\theta'}}E\left[\prod_{i=1}^{n}\frac{f_{\theta_{{\bf x}_i}}(Y_i)}{f_{\theta'_{{\bf x}_i}}(Y_i)}\right] d\nu(\theta) \right)^{\frac{\alpha'}{2}} \\
&& < e^{-n\alpha'\frac{\delta}{2}} \cdot e^{n\alpha'\frac{\delta}{4}} = e^{-n\alpha\frac{\delta}{2}}.
 \end{eqnarray*}
 \end{proof}
 
\section*{Acknowledgements}
This work was supported by a research grant provided by Indian Institute of Management Ahmedabad, India.

\end{document}